\documentclass[10pt,reqno]{amsart}

\usepackage[T1]{fontenc}
\usepackage{lmodern}
\usepackage{microtype}
\usepackage{amsmath,amssymb,mathrsfs,mathtools}
\usepackage{xcolor} 
\usepackage[colorlinks=true,linkcolor=blue!55!black,
  citecolor=blue!55!black,urlcolor=blue!55!black]{hyperref}
\usepackage{cleveref}
\usepackage{aliascnt}

\allowdisplaybreaks[3]

\newtheorem{theorem}{Theorem}[section]
\newaliascnt{proposition}{theorem}
\newtheorem{proposition}[proposition]{Proposition}
\aliascntresetthe{proposition}
\newaliascnt{lemma}{theorem}
\newtheorem{lemma}[lemma]{Lemma}
\aliascntresetthe{lemma}
\newaliascnt{corollary}{theorem}

\aliascntresetthe{corollary}

\theoremstyle{definition}

\newtheorem{example}[theorem]{Example}

\crefname{proposition}{Proposition}{Propositions}
\crefname{theorem}{Theorem}{Theorems}
\crefname{lemma}{Lemma}{Lemmas}
\crefname{corollary}{Corollary}{Corollaries}
\newaliascnt{conjecture}{theorem}
\newtheorem{conjecture}[conjecture]{Conjecture}
\aliascntresetthe{conjecture}
\crefname{conjecture}{Conjecture}{Conjectures}
\Crefname{conjecture}{Conjecture}{Conjectures}
\newcommand{\Q}{\mathbb Q}
\newcommand{\Z}{\mathbb Z}
\newcommand{\R}{\mathbb R}
\newcommand{\C}{\mathbb C}
\newcommand{\V}{V_K}
\newcommand{\dd}{\mathrm d}
\newcommand{\eps}{\varepsilon}
\newcommand{\cl}[1]{\langle #1\rangle}
\DeclareMathOperator{\Arg}{Arg}
\DeclareMathOperator{\sgn}{sgn}
\newcommand{\M}{\mathcal M}
\newcommand{\Li}{\operatorname{Li}}
\newcommand{\Norm}{\operatorname{N}}
\newcommand{\atanh}{\operatorname{arctanh}}
\DeclareMathOperator{\Log}{Log}
\numberwithin{equation}{section}

\title[The weak Chinburg conjecture on Mahler measures]{The weak Chinburg conjecture on Mahler measures}

\author[X. Guo]{Xuejun Guo}
\address{School of Mathematics, Nanjing University, Nanjing 210093,
People's Republic of China}
\email{guoxj@nju.edu.cn}

\author[Z. Tao]{Zhengyu Tao}
\address{School of Mathematics, Hefei University of Technology,
Hefei 230009, People's Republic of China}
\email{taozhy@hfut.edu.cn}
\date{}

\hypersetup{
  pdftitle={The weak Chinburg conjecture on Mahler measures},
  pdfauthor={Xuejun Guo and Zhengyu Tao}
}

\subjclass[2020]{Primary 11R06; Secondary 11M06, 11R11, 11R42, 19F27}
\keywords{Chinburg's conjecture, Mahler measure, Bloch groups,
Dirichlet $L$-functions}

\begin{document}

\begin{abstract}
For every negative fundamental discriminant $-f$ and every $k\geq1$,
we construct a rational function
$R_{f,2k}\in\Q(x_1,\ldots,x_{2k})$ and a constant
$r_{f,2k}\in\Q^\times$ such that
\[
m(R_{f,2k})=r_{f,2k}L'(\chi_{-f},1-2k),
\]
where $m$ denotes the logarithmic Mahler measure and $\chi_{-f}$ is
the quadratic Dirichlet character associated with $-f$. This proves the weak Chinburg conjecture. An independent construction using Bloch cycles yields a stronger result in two variables: there exists a nonzero polynomial
$R_f\in\Q[x,y]$ satisfying
\[
m(R_f)=4wL'(\chi_{-f},-1),
\]
where $w$ is the number of roots of unity in
$\Q(\sqrt{-f})$.
\end{abstract}

\maketitle

\section{Introduction}\label{sec:introduction}

For a nonzero rational function
$R\in\mathbb{C}(x_1,\ldots,x_d)$, its logarithmic Mahler measure
is defined by
\begin{equation}\label{eq:mahler}
 m(R)=\frac{1}{(2\pi)^d}
 \int_0^{2\pi}\cdots\int_0^{2\pi}
 \log\left|R\left(e^{it_1},\ldots,e^{it_d}\right)\right|
 \,\mathrm{d}t_1\cdots\mathrm{d}t_d.
\end{equation}  Let $-f<0$ be a fundamental discriminant, and let
$\chi_{-f}=\left(\frac{-f}{\,\cdot\,}\right)$ be the associated
primitive odd quadratic Dirichlet character.  In 1981, Smyth \cite{Smyth2} established the identity
\[
 m(1 + x_1 + x_2) = \frac{3\sqrt{3}}{4\pi} L( \chi_{-3},2) = L'(\chi_{-3}, -1)
\]
relating the Mahler measure of a bivariate polynomial to a Dirichlet \(L\)-value.

In 1984, Chinburg \cite{Chinburg} formulated a conjecture and posed a stronger question, which we refer to as the weak and strong Chinburg conjectures, respectively.

\begin{conjecture}[Chinburg]\label{con:Chinburg}  Let $-f<0$ be a fundamental discriminant and $k$ a positive integer. 
\begin{itemize}
    \item[\textup{(1)}] \textup{(weak form)} There exists a  rational function $R \in \Q(x_1,\ldots,x_{2k})^\times$ and a rational number $r \in \Q^{\times}$ such that $m(R) = r L'(\chi_{-f}, 1-2k).$
    \item[\textup{(2)}] \textup{(strong form)}  One may take a nonzero polynomial $R \in \Z[x_1,\ldots,x_{2k}]$ in \textup{(1)}.
\end{itemize}
\end{conjecture}

Deninger \cite{Deninger} related Mahler measures to Deligne periods
and regulators, while Boyd and Rodriguez-Villegas \cite{BRV} developed
their connection with dilogarithms and hyperbolic geometry.
For specific conductors, Ray \cite{Ray} obtained examples using twisted
$L$-series, and Liu and Qin \cite{LQ} reported further numerical candidates at $f=23,303,755$. Bertin and Mehrabdollahei \cite{BM} constructed
polynomial families yielding exact Mahler measure identities related
to the weak conjecture. Hokken, Mehrabdollahei, and Ringeling proved
the bivariate weak conjecture with cyclotomic coefficients and gave
further numerical candidates over $\Q$
\cite[Theorem~4.1 and Tables~2--3]{HMR}; their Table~1 also summarizes
previously known examples of the strong form.

In higher dimensions, Lal\'in \cite{Lalin06} obtained formulas for
families with arbitrarily many variables in terms of zeta values and
Dirichlet $L$-values. Lal\'in, Nair, and Roy \cite{LNR} studied families with non-linear degree dependence, while Nair \cite{Nair} obtained further formulas involving zeta values and the Dirichlet \(L\)-function of conductor \(3\).

Prior to the present work, both the weak and strong versions of Chinburg's
conjecture remained open in general; see \cite[Section~1]{BM},
\cite[Remark~4.2.5]{Pengo}, and \cite[Remark~B]{HMR}.
We prove the weak conjecture and obtain a polynomial realization over \(\Q\) in two variables.

\begin{theorem}\label{thm:weak-higher} Let \(-f\) be a negative fundamental discriminant. 
For every integer \(k\geq1\), there exist a nonzero rational function \(R_{f,2k}\in\Q(x_1,\ldots,x_{2k})\) and a nonzero rational number \(r_{f,2k}\) such that
\begin{equation}\label{eq:weak-higher}
 m(R_{f,2k})=r_{f,2k}L'(\chi_{-f},1-2k).
\end{equation}
\end{theorem}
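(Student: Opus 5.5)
The plan is to pass from $L'(\chi_{-f},1-2k)$ to a finite sum of Clausen-type values at angles $2\pi a/f$. Then realize each angle geometrically as an arc endpoint in a Jensen-formula computation. Finally, combine the building blocks multiplicatively, which is exactly why rational functions rather than polynomials are allowed.

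\emph{Step 1 (functional equation).} Since $\chi_{-f}$ is odd and primitive with Gauss sum $\tau(\chi_{-f})=i\sqrt f$, the functional equation gives
\[
L'(\chi_{-f},1-2k)=q_k\, f^{2k-1/2}\pi^{-2k}L(\chi_{-f},2k),\qquad q_k\in\Q^\times .
\]
Expanding $\chi_{-f}(n)$ in additive characters gives
\[
\sum_{a=1}^{f-1}\chi_{-f}(a)\,\mathrm{Cl}_{2k}\!\left(\tfrac{2\pi a}{f}\right)=\sqrt f\,L(\chi_{-f},2k),
\qquad
\mathrm{Cl}_{2k}(\theta)=\sum_{n\ge1}\frac{\sin n\theta}{n^{2k}}.
\]
Combining the two displays, I expect
\[
L'(\chi_{-f},1-2k)\in\Q^\times\cdot\pi^{-2k}\sum_a\chi_{-f}(a)\,\mathrm{Cl}_{2k}(2\pi a/f),
\]
up to an explicit power of $f$ that should turn out rational. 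I will check the power of $f$ precisely, since the rationality of $r_{f,2k}$ depends on it. Because $\chi_{-f}$ is odd and $\mathrm{Cl}_{2k}$ is odd, this sum equals $2\sum_{0<a<f/2}\chi_{-f}(a)\,\mathrm{Cl}_{2k}(2\pi a/f)$. So it suffices to realize, for each $\theta=2\pi a/f$, a rational function over $\Q$ whose measure is a rational multiple of $\pi^{-2k}\mathrm{Cl}_{2k}(\theta)$, plus error terms that cancel in the $\chi$-weighted sum.

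\emph{Step 2 (building blocks).} The point is that Mahler measure is invariant under complex conjugation of coefficients. Hence odd functions of $\theta$ cannot come from cyclotomic coefficients directly. They must come, as in Smyth's $m(1+x_1+x_2)$, from endpoints of the region $\{|A(x)|\ge|B(x)|\}$ on the torus when one applies Jensen to $A+By$. I would take $A,B\in\Q[x_1]$ built from cyclotomic-type factors, e.g.\ $A=1-x_1^{n}$ type pieces, chosen so that $|A|=|B|$ exactly at $x_1=e^{\pm 2\pi i a/f}$ and at a few other roots of unity. Jensen then gives $m(A+By)=m(B)+\frac1{2\pi}\int_{\text{arc}}\log|A/B|\,dt$, which evaluates to a rational multiple of $\frac1\pi\mathrm{Cl}_2$ at the endpoints plus constants; this settles $k=1$. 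To climb to $2k$ variables, I would use Lal\'in's method with extra variables entering through factors like $(1-x_j)/(1+x_j)$. Each such pair of variables raises the polylog weight by two and contributes a factor $\pi^{-2}$, producing $\pi^{-(2k-1)}\mathrm{Cl}_{2k}(\theta)$ together with lower-weight products of Clausen values and odd zeta values. I expect to need an induction on $k$ with an explicit recursive formula for these measures.

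\emph{Step 3 (cancellation).} I then take the $\chi$-weighted product and quotient $R_{f,2k}=\prod_a B_a^{\chi_{-f}(a)}$, using additivity $m(PQ^{-1})=m(P)-m(Q)$. The constant and zeta-value error terms are independent of $a$ or even in $a$, so they are killed by $\sum_a\chi_{-f}(a)=0$ and by oddness. Mixed lower-weight terms are the real danger. I would try to arrange the building blocks so that the lower-weight pieces are themselves combinations of $\mathrm{Cl}_{2j}(\theta)$ multiplied by $a$-independent constants. I would then remove them inductively by subtracting suitable multiples of the already constructed $R_{f,2j}$, after adding dummy variables, which do not change $m$.

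\emph{Main obstacle.} I expect the main obstacle to be Step 2 in higher weight: obtaining a closed formula for the measure of the $2k$-variable building block whose leading term is exactly $\pi^{-(2k-1)}\mathrm{Cl}_{2k}(2\pi a/f)$ with a nonzero rational coefficient, and whose remaining terms are controllable enough to cancel in Step 3. Here I would first try Lal\'in's torus-integral reductions to iterated integrals of $\log$ and $\arg$ over arcs, and verify nonvanishing of the final coefficient $r_{f,2k}$ directly.
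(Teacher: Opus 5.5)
Your outline has the right ingredients at its ends: the functional equation, the Gauss sum, and Lal\'in-type variables $(1-x_j)/(1+x_j)$. But the central step fails, and so does the lower-weight elimination.

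\textbf{Step~2 cannot produce single-angle blocks.} You want, for each $a$ with $\chi(a)\ne0$, a block $A+By$ with $A,B\in\Q[x_1]$ whose Jensen region $\{|A|>|B|\}$ has an endpoint at $e^{2\pi ia/f}$ but not at the other primitive $f$-th roots of unity. Such an endpoint cannot be isolated. The function $h(t)=|A(e^{it})|^2-|B(e^{it})|^2$ is a trigonometric polynomial with rational coefficients. Each derivative $h^{(k)}(2\pi b/f)$ equals $i^k\sigma_b(\eta_k)$ for a fixed $\eta_k\in\Q(\zeta)$, where $\sigma_b\in\operatorname{Gal}(\Q(\zeta)/\Q)$ sends $\zeta\mapsto\zeta^b$. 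Hence the order of vanishing of $h$ is the same at every $2\pi b/f$ with $\gcd(b,f)=1$. Every such block therefore sees all primitive $f$-th roots of unity at once, so your $B_a$ are not really indexed by $a$, and $\prod_a B_a^{\chi(a)}$ cannot isolate the character. What remains is to design one polynomial over $\Q$ whose sign pattern matches $\chi_{-f}$. That is the Smyth--Ray problem, which had been open; even the paper's methods place an individual $B_2(\zeta^a)$ only in $\M_3$.

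The missing idea is a norm construction with sign repair. Put the whole twisted product into $U_f(t)=\prod_a(1-\zeta^at)^{\chi(a)}$, whose coefficients lie in $K=\Q(\sqrt{-f})$ and whose conjugate is $U_f^{-1}$. Descend to $\Q$ via $\Norm_{K/\Q}(x-U_f(t))$; Jensen then returns only $|\ell_f(s)|$. Restore the sign with $V_f=(1-\sqrt{-f}\,t)/(1+\sqrt{-f}\,t)$ and an integer $M$ satisfying $|\ell_f|<Mg_f$ on $s>0$, so that $|\ell_f+Mg_f|-M|g_f|=\sgn(s)\ell_f(s)$. Substituting $t=T_{n-1}$ and integrating against Lal\'in's kernels gives a single $\mathcal R_{f,n}$ over $\Q$ with $m(\mathcal R_{f,n})=2\sum_{0<a<f/2}\chi(a)\bigl(M_n(\zeta^a)-M_{n-1}(\zeta^a)\bigr)$.

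\textbf{Step~3's cancellation also fails.} At $z=\zeta^a$, the lower-weight terms in Lal\'in's triangular formulas are not $a$-independent multiples of $\mathrm{Cl}_{2j}(2\pi a/f)$. They are rational combinations of all $B_j(\zeta^a)$ with $j<n$, including odd-weight real parts $\pi^{1-j}\operatorname{Re}A_j(\zeta^a)$ and logarithms of absolute values of cyclotomic numbers. Their coefficients are polynomials in $a/f$, coming from $\Log(-i\zeta^a)=i(2\pi a/f-\pi/2)$. Their $\chi$-weighted sums are in general not multiples of $L'(\chi,1-2j)$. Already at conductor $8$, the weight-four measure $m(F_4)$ involves $\log(1+\sqrt2)$ and $\sqrt2\,L(\chi_8,3)/\pi^2$. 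So subtracting previously built $R_{f,2j}$ cannot remove these terms.

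The paper instead proves that every individual $B_j(z)$, for $z$ a nonreal root of unity, lies in $\M_{j+1}\subseteq\M_n$ when $j<n$. It uses the weak-approximation identity $m\bigl(\Norm_{E/\Q}(v-\alpha G)/\Norm_{E/\Q}(v-\alpha)\bigr)=m(G^{\sigma_0})$ over the totally real field $E=\Q(z+z^{-1})$. It then inducts on weight using the nonzero diagonal coefficients $c_{rr},d_{rr}$. The same descent places the $M_{n-1}(\zeta^a)$ terms in $\M_n$.

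A minor slip: in Step~1 the functional equation gives $\pi^{1-2k}$, not $\pi^{-2k}$. The corrected exponent matches the $\pi^{-(2k-1)}$ you aim for in Step~2.
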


The proof is effective in principle, although we do not attempt to
give general degree bounds for $R_{f,2k}$.

\begin{theorem}\label{thm:weak}
Let $-f$ be a negative fundamental discriminant, and let $w$ be
the number of roots of unity in $\Q(\sqrt{-f})$.
There exists a nonzero polynomial $R_f\in\Q[x,y]$ such that
\begin{equation}\label{eq:weak}
 m(R_f)=4w\,L'(\chi_{-f},-1).
\end{equation}
\end{theorem}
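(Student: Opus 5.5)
We would prove \Cref{thm:weak} through the Bloch group of $K=\Q(\sqrt{-f})$ and the Bloch--Wigner dilogarithm $D$, together with the standard Maillot/Boyd--Rodriguez-Villegas evaluation of the Mahler measure of a polynomial that is linear in one variable. First recall the classical input (Zagier, Bloch--Suslin). There is an element $\xi_f=\sum_j n_j[z_j]\in\Z[K]$ lying in the Bloch group $B(K)$ (so $\sum_j n_j\, z_j\wedge(1-z_j)=0$ in $\wedge^2 K^\times\otimes\Q$), and it satisfies
\[
\sum_j n_j D(z_j)=c\,\frac{f^{3/2}}{\pi^2}L(\chi_{-f},2),
\]
with $c\in\Q^\times$ explicit. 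The functional equation turns this into a rational multiple of $L'(\chi_{-f},-1)$. Concretely, one can take the element coming from the ideal triangulation of the Bianchi orbifold $\mathbb H^3/\mathrm{PSL}_2(\mathcal O_K)$. Its volume is $|d_K|^{3/2}\zeta_K(2)/(4\pi^2)$, which by Humbert's formula equals $\frac{|d_K|^{3/2}}{4\pi^2}\zeta(2)L(\chi_{-f},2)$. The factor $w$ enters through the normalization relating $K_3^{\mathrm{ind}}(K)$ and the Bloch group, since its torsion is $\mu_w$-related. This is exactly why the target constant is $4w$.

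Second, we build $R_f$ from $\xi_f$. The plan is to find, for each shape $z\in K$, a polynomial $P_z(x,y)=A(x)+B(x)y$ with $A,B\in\Q[x]$. Jensen's formula gives
\[
m(P_z)=m(B)+\frac1{2\pi}\int_{\gamma}\eta(x,y),
\]
where $\gamma$ is the path on the curve $A+By=0$ with $|x|=1$ and $|y|\ge1$. Choose $A$ and $B$ products of linear factors (e.g.\ $P=\prod(x-\alpha_i)-y\prod(x-\beta_i)$, made $\Q$-rational by Galois-symmetrizing over $\alpha\mapsto\bar\alpha$). Then the curve is rational, $\eta$ becomes an exact combination $\sum D$ evaluated at cross-ratios of the boundary points, and the boundary points are the roots on $|x|=1$. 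Multiplicativity $m(PQ)=m(P)+m(Q)$ then lets us take $R_f=\prod_j P_{z_j}^{n_j}$. We then clear denominators to a polynomial: $m$ of a quotient is a difference, so we must arrange all $n_j>0$. This is possible by using the relations $[z]=-[1/z]=-[1-z]$ in $B(K)$, which flip signs, so that every coefficient becomes positive.

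The main obstacle is the geometric realization. We need the boundary endpoints of $\gamma$ to lie on the unit circle and the resulting dilogarithm sum to be exactly $\sum n_jD(z_j)$, rather than $D$ at other points, all with $\Q$ coefficients. The coefficients are real because the conjugates $\bar z$ contribute $D(\bar z)=-D(z)$ with orientation reversal, and this doubles the contribution instead of cancelling it. That doubling explains the factor $2$ that, together with the $2$ from $D$-normalization, gives the $4$. To handle this we would first try the Cayley-transform trick: a shape $z\in K$ can be written as a cross-ratio of four points of $\mu_\infty\cup\{$unimodular elements of $K\}$. Unimodular elements $u=\alpha/\bar\alpha$ exist densely in $K$, so any cross-ratio can be produced from points on the unit circle, and the factors $x-u$ are then symmetrized over Galois. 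Finally we track the $m(B)$ terms, which are Mahler measures of one-variable products of cyclotomic or unimodular factors and hence contribute $\log|\text{leading coeff}|$ terms; we cancel these by normalizing $B$ to be monic with unimodular roots. We also check that all Jensen boundary contributions (the $\log$-terms) cancel because the element lies in $B(K)$.
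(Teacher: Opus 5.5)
\textbf{There is a genuine gap.} It is the central step, which you flag as the main obstacle and do not resolve: you never produce, for a nonreal $z\in K$, a polynomial over $\Q$ whose Mahler measure is a fixed multiple of $D(z)$ modulo removable terms.

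With $A,B\in\Q[x]$ having unimodular roots and $m(B)=0$, the Boyd--Rodriguez-Villegas computation gives a signed sum of values $D(x_0/\alpha)$. Here $\alpha$ runs over the roots of $A$ and $B$, and $x_0$ runs over the endpoints of $\gamma$. Those endpoints are the solutions of $|A(x)|=|B(x)|$ on $|x|=1$, not the roots of $A$ or $B$. They are dictated by $A$ and $B$ and in general lie outside $K$. Nothing in your plan forces the ratios $x_0/\alpha$ to reproduce exactly $\sum_j n_jD(z_j)$.

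Your proposed fix fails as stated. Four points of the unit circle are concyclic, so their cross-ratio is real and its $D$-value is $0$. The correct statement in this direction is Kummer's formula, which writes $2D(z)$ as a sum of $D$ at the unimodular numbers $z/\bar z$, $(1-\bar z)/(1-z)$, $\bar z(1-z)/(z(1-\bar z))$. But that still leaves the endpoint problem untouched.

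Two further claims are wrong, and they hide where the constant $4w$ actually comes from.

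First, the cycle condition does not make the logarithmic terms cancel. Applying the contraction $\iota_p(a\wedge b)=\nu_p(b)a-\nu_p(a)b$, with $\nu_p=v_p\circ N_{K/\Q}$, to $\delta\beta=0$ shows only that certain products of the $z_i$ and $1-z_i$ lie in $\mu(K)$. Hence the correction lies in $\frac{\pi}{w}\sum_p\Z\log p$. It is nonzero in practice: for the paper's conductor-$31$ cycle, $k_2=14$, $k_3=48$, $k_5=-10$. One removes it by raising to the $w$-th power and multiplying by $\prod_pp^{-4k_p}$. That is the real source of $w$, not a $K_3$ torsion normalization.

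Second, the flip $-[z]\leftrightarrow[1/z]$ preserves $\delta$ and $D$. But in a realization where the exponent is $n\,\sgn(\operatorname{Im}z)$, it leaves the sign of the exponent unchanged, since $\operatorname{Im}(1/z)$ and $\operatorname{Im}z$ have opposite signs.

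The paper avoids the endpoint problem entirely by using a ready-made building block. Maillot's formula gives $\eps_z\,m(1+|z|x+|1-z|y)=\bigl(D(z)+C(z)\bigr)/\pi$, where $C(z)=\Arg z\log|1-z|-\Arg(1-z)\log|z|$. This is made rational through $Q_{A,B}(x^2,y^2)=\prod_{s,t=\pm1}(1+s\sqrt A\,x+t\sqrt B\,y)$, with $Q_{A,B}=(1+Ax-By)^2-4Ax$ and $A=|z|^2$, $B=|1-z|^2\in\Q$, so that $m(Q_{A,B})=4m(1+\sqrt A\,x+\sqrt B\,y)$.

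Positivity of all exponents is not arranged by relations. It comes from the specific Burns--de Jeu--Gangl--Rahm--Yasaki cycles $\gamma_K=\sum_jc_j([z_j]-[\bar z_j])$ with $c_j>0$ and $\operatorname{Im}z_j>0$. Their regulator is $D(\gamma_K)=4\pi L'(\chi_{-f},-1)$. Taking $R_f=\prod_pp^{-k_p}\prod_jQ_{|z_j|^2,|1-z_j|^2}^{wc_j/2}$ then gives $m(R_f)=4wL'(\chi_{-f},-1)$.
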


Theorem~\ref{thm:weak} strengthens the bivariate weak conjecture by
producing a polynomial over $\Q$. It does not by itself imply the
strong conjecture: multiplying by a positive integer $N$ to clear
denominators adds $\log N$ to the Mahler measure.

The proof of Theorem~\ref{thm:weak-higher} begins with the rational function
\[
U_f(t)=\prod_{\substack{a\bmod f\\\chi_{-f}(a)\ne0}}
       (1-\zeta_f^a t)^{\chi_{-f}(a)},
\qquad \zeta_f=e^{2\pi i/f},
\]
whose coefficients lie in $K=\Q(\sqrt{-f})$, with $\sqrt{-f}=i\sqrt f$.
Its coefficient conjugate is $U_f^{-1}$. Taking the coefficient norm
of $x-U_f(t)$, setting $t=is$, and applying Jensen's formula in $x$
gives $|\log|U_f(is)||$. This loses the sign needed in the
character sum. For $f\ne4$, we recover it by introducing
\[
V_f(t)=\frac{1-\sqrt{-f}\,t}{1+\sqrt{-f}\,t}.
\]
Writing $\ell_f(s)=\log|U_f(is)|$ and
$g_f(s)=\log|V_f(is)|$, we choose an integer $M$ large enough
that $|\ell_f(s)|<Mg_f(s)$ for positive $s$ away from the
pole. Since both functions are odd, this gives
\[
|\ell_f(s)+Mg_f(s)|-M|g_f(s)|
   =\operatorname{sgn}(s)\ell_f(s)
\]
almost everywhere. The left-hand side is the Jensen integral in one
variable of a quotient of norms defined over $\Q$.
Lal\'in's integral formulas \cite{Lalin03,Lalin06} then express
its Mahler measure in terms of the desired polylogarithmic
character sum and lower-weight terms. A second norm
construction uses weak approximation to isolate one embedding of a
suitable totally real field in Jensen's formula, allowing these
additional terms to be realized over $\Q$ and eliminated
successively. The quadratic Gauss sum and the functional
equation identify the remaining sum as a nonzero rational multiple of
$L'(\chi_{-f},1-2k)$. The case $f=4$ follows directly from
Lal\'in's formulas.

For Theorem~\ref{thm:weak}, the basic building block is the
quadratic polynomial
\[
Q_{A,B}(x,y)=(1+Ax-By)^2-4Ax,
\qquad A=|z|^2,\quad B=|1-z|^2,
\]
where $z$ is a nonreal element of an imaginary quadratic field.
Its coefficients are rational, and Maillot's formula expresses
its Mahler measure in terms of the Bloch--Wigner dilogarithm $D(z)$
and a logarithmic correction.
The key observation is that these corrections can be removed
when the corresponding integer combination of symbols is a Bloch
cycle. More precisely, the cycle
condition forces certain products of $z$ and $1-z$ to be roots
of unity. If the field contains $w$ roots of unity, the total
contribution of these corrections to the Mahler measure has the form
\[
\frac{4}{w}\sum_p k_p\log p,\qquad k_p\in\Z.
\]
Taking a power and multiplying by a rational number removes this
term. We apply this construction to the geometric cycles of
Burns, de Jeu, Gangl, Rahm, and Yasaki \cite{BDGRY}: their conjugate
pairs give positive exponents in the polynomial product, and their regulator
formula gives the required multiple of $L'(\chi_{-f},-1)$.

The examples at conductor \(31\) illustrate the difference between the two constructions: the Bloch-cycle method yields a polynomial of total degree \(132\), whereas, for the explicit parameters chosen here, the general construction yields a rational function of substantially larger degree.

Theorem~\ref{thm:weak-higher} is proved in
Section~\ref{higher:sec:general}, and 
Theorem~\ref{thm:weak} is proved in Section~\ref{sec:cycles}.
Section~\ref{sec:examples} first gives a family at conductor $8$ for every $k \ge 1$ and then compares two realizations at conductor
$31$ in the case $k=1$.

\section{Rational functions in arbitrary even dimension}
\label{higher:sec:general}
The proof of Theorem~\ref{thm:weak-higher}  combines a norm construction that preserves signed character sums with triangular identities relating Mahler measures to polylogarithmic values. We begin with the integral representation underlying these identities.

Let $\M_d$ be the $\Q$-linear span of Mahler measures of nonzero
rational functions over $\Q$ in $d$ variables. Since adjoining an unused variable does not change the Mahler measure, we have $\M_d\subseteq \M_{d+1}$. For the particular family used below, set
\[
T_d=\prod_{j=1}^d\frac{1-x_j}{1+x_j},\qquad M_d(z)=m(z+T_d).
\]
We use the half-angle substitution of \cite[Proposition 1]{Lalin03}
and \cite[Section 5]{Lalin06}. With $x_j=e^{i\theta_j}$ and
$t_j=-\tan(\theta_j/2)$, it gives
\begin{equation}\label{higher:real-integral}
 M_d(z)=\frac1{\pi^d}\int_{\R^d}
 \log|z+i^dt_1\cdots t_d|\prod_{j=1}^d\frac{\dd t_j}{1+t_j^2}.
\end{equation}
To reduce integrals over the positive orthant to one dimension,
introduce the density of the product $t_1\cdots t_d$:
\begin{equation}\label{higher:kernel-definition}
 p_1(t)=\frac2{\pi(1+t^2)},\qquad
 p_{d+1}(t)=\int_0^\infty p_d(u)p_1(t/u)\frac{\dd u}{u}
 \quad(t>0).
\end{equation}
The substitution $t=t_1\cdots t_d$, as used in
\cite[Section 5]{Lalin06}, gives in our notation
\begin{equation}\label{higher:kernel-reduction}
 \left(\frac2\pi\right)^d\int_{(0,\infty)^d}
 h(t_1\cdots t_d)\prod_{j=1}^d\frac{\dd t_j}{1+t_j^2}
 =\int_0^\infty h(t)p_d(t)\,\dd t
\end{equation}
for every measurable $h$ for which the integrals converge absolutely.
Each $p_d$ is a probability density  satisfying $p_d(t)\leq1/(\pi t)$,
since $p_1(t/u)/u\leq1/(\pi t)$. Splitting \eqref{higher:real-integral}
into orthants therefore gives
\begin{equation}\label{higher:kernel-Md}
 M_d(z)=\frac12\int_0^\infty
 p_d(t)\bigl(\log|z+i^dt|+\log|z-i^dt|\bigr)\,\dd t.
\end{equation}

We now encode the quadratic character in a rational function.
Assume $f\ne4$; the case $f=4$ is treated in the proof of
Theorem~\ref{thm:weak-higher}. Write $\chi=\chi_{-f}$, $\vartheta=i\sqrt f$, $K=\Q(\vartheta)$, and
$\zeta=e^{2\pi i/f}$. Define
\begin{equation}\label{higher:Af}
\prod_{\substack{a\bmod f\\\chi(a)=1}}(1-\zeta^at)
       =P_f(t)+\vartheta Q_f(t),\quad P_f,Q_f\in\Q[t],\qquad
U_f(t)=\frac{P_f(t)+\vartheta Q_f(t)}{P_f(t)-\vartheta Q_f(t)}.
\end{equation}

The product is fixed by $\ker\chi$, so its coefficients lie in $K$;
its conjugate contains the factors with $\chi(a)=-1$. Thus this $U_f$ agrees
with the product in the introduction.
For real $s$, set $\ell_f(s)=\log|U_f(is)|$.
Pairing $a$ and $-a$ in the logarithm of $U_f$ gives
\begin{equation}\label{higher:ell-pair}
\ell_f(s)=\sum_{\substack{0<a<f/2\\\chi(a)\ne0}}\chi(a)
 \log\left|\frac{\zeta^a+is}{\zeta^a-is}\right|
 =\sum_{\substack{0<a<f/2\\\chi(a)\ne0}}\chi(a)
 \atanh\!\left(\frac{2s\sin(2\pi a/f)}{1+s^2}\right)\quad(s>0).
\end{equation}
 For $f\ne4$, none of the relevant roots
is $\pm i$. Thus $\ell_f$ is finite on the positive half-line;
$\ell_f(-s)=-\ell_f(s)$ and $\ell_f(1/s)=\ell_f(s)$.

To see what happens when we take norms, let $u=U_f(is)$.
Since coefficient conjugation gives $\overline{U_f}(t)=U_f(t)^{-1}$,
the norm of $x-U_f(t)$ specializes at $t=is$ to
$(x-u)(x-u^{-1})$. Jensen's formula gives
\[
 m\bigl((x-u)(x-u^{-1})\bigr)
 =\log\max\{1,|u|\}+\log\max\{1,|u|^{-1}\}
 =|\ell_f(s)|.
\]
To recover the sign lost in this absolute value, introduce the
auxiliary function
\begin{equation}\label{higher:reference}
V_f(t)=\frac{1-\vartheta t}{1+\vartheta t},\qquad
g_f(s)=\log|V_f(is)|=
\log\left|\frac{1+\sqrt f\,s}{1-\sqrt f\,s}\right|.
\end{equation}
The function $g_f$ is odd on
$\R\setminus\{\pm1/\sqrt f\}$ and strictly positive for
$s>0$, $s\ne1/\sqrt f$, with logarithmic singularities at
$s=\pm1/\sqrt f$.

For $s>0$, $s\ne1/\sqrt f$, we have
$g_f(s)=2\atanh\min\{\sqrt f\,s,(\sqrt f\,s)^{-1}\}$, so
$g_f(s)\ge2s/(\sqrt f(1+s^2))$.
Together with $\atanh u\le u/(1-u^2)$ for $0\le u<1$ and
$1-(2s\sin\alpha/(1+s^2))^2\ge\cos^2\alpha$, this gives
\[
 \frac{|\ell_f(s)|}{g_f(s)}\le
 \sqrt f\sum_{\substack{0<a<f/2\\\chi(a)\ne0}}
 \frac{\sin(2\pi a/f)}{\cos^2(2\pi a/f)}
 \qquad(s>0,\ s\ne1/\sqrt f).
\]
Choose a positive integer $M$ exceeding this finite algebraic bound.
Then $|\ell_f(s)|<Mg_f(s)$ on this set, so almost everywhere on $\R$,
\begin{equation}\label{higher:signed-identity}
|\ell_f(s)+Mg_f(s)|-M|g_f(s)|=\sgn(s)\ell_f(s).
\end{equation}
Set $W_f=U_fV_f^M$.  Jensen's formula gives
\begin{equation}\label{higher:jensen-signed}
\begin{aligned}
m_x\left(
 \frac{(x-W_f(is))(x-W_f(is)^{-1})}
      {\bigl((x-V_f(is))(x-V_f(is)^{-1})\bigr)^M}
\right)
&=|\ell_f(s)+Mg_f(s)|-M|g_f(s)|\\
&=\sgn(s)\ell_f(s)=\ell_f(|s|),
\end{aligned}
\end{equation}
where $m_x$ denotes Mahler measure in $x$ alone.
Since coefficient conjugation sends $W_f$ to $W_f^{-1}$,
$(x-W_f(is))(x-W_f(is)^{-1})$ is the specialization at $t=is$
of $\Norm_{K/\Q}(x-W_f(t))$, and similarly for $V_f$.

Let $n=2k$ and $d=n-1$. Define
\begin{equation}\label{higher:R-def}
\mathcal R_{f,n}=
\frac{\Norm_{K/\Q}(x_n-W_f(T_d))}
{\Norm_{K/\Q}(x_n-V_f(T_d))^M}
\in\Q(x_1,\ldots,x_n)^\times.
\end{equation}

\begin{proposition}\label{higher:lift}
For every $f\ne4$ and every even $n\ge2$,
\begin{equation}\label{higher:lift-eq}
m(\mathcal R_{f,n})=
2\sum_{0<a<f/2}\chi(a)
\bigl(M_n(\zeta^a)-M_{n-1}(\zeta^a)\bigr).
\end{equation}
\end{proposition}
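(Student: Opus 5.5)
The plan is to compute $m(\mathcal R_{f,n})$ by Fubini: integrate first over $x_n$ using Jensen's formula, which is \eqref{higher:jensen-signed}, and then over $x_1,\dots,x_d$ using the half-angle substitution and the kernel $p_d$. Separately, I will express each difference $M_n(z)-M_{n-1}(z)$ through the same kernel and match the two expressions termwise via \eqref{higher:ell-pair}.

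\emph{Step 1 (the left side).} For $x_j=e^{i\theta_j}$ we have $(1-x_j)/(1+x_j)=-i\tan(\theta_j/2)=it_j$ with $t_j=-\tan(\theta_j/2)$. Hence $T_d=i^d t_1\cdots t_d$. Since $d=n-1$ is odd, $T_d=is$ with $s=\pm t_1\cdots t_d$ real, the sign depending only on $d$.

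Specializing \eqref{higher:R-def} at such a point gives exactly the function of $x_n$ in \eqref{higher:jensen-signed}. Its Mahler measure in $x_n$ is therefore $\ell_f(|s|)=\ell_f(|t_1\cdots t_d|)$ for almost every $(\theta_1,\dots,\theta_d)$. The exceptional set, where $s=\pm1/\sqrt f$ or $s=0$, is null.

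Averaging over the torus and using the half-angle substitution as in \eqref{higher:real-integral}, the integrand depends only on $|t_1\cdots t_d|$, so all $2^d$ orthants contribute equally. Then \eqref{higher:kernel-reduction} gives
\[
 m(\mathcal R_{f,n})=\int_0^\infty p_d(t)\,\ell_f(t)\,\dd t .
\]

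\emph{Step 2 (the right side).} I would not compute $M_n$ via $p_n$, but instead apply Jensen in the last variable $x_n$. Write $u=T_d$. Then
\[
z+T_d\frac{1-x_n}{1+x_n}=\frac{(z+u)+(z-u)x_n}{1+x_n},
\]
and $m(1+x_n)=0$, so the $x_n$-measure is $\log\max\{|z+u|,|z-u|\}$. With $u=is$ as in Step 1, this gives
\[
M_n(z)=\int \log\max\{|z+is|,|z-is|\},
\]
while $M_{n-1}(z)$ is the same average of $\tfrac12(\log|z+is|+\log|z-is|)$, as in \eqref{higher:kernel-Md}. Subtracting, and using $\max\{A,B\}-\tfrac12(A+B)=\tfrac12|A-B|$, we get
\[
M_n(z)-M_{n-1}(z)=\frac12\int_0^\infty p_d(t)\,\Bigl|\log\Bigl|\frac{z+it}{z-it}\Bigr|\Bigr|\,\dd t,
\]
after folding orthants, since the integrand is even in $s$.

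For $z=\zeta^a$ with $0<a<f/2$ we have $\operatorname{Im}z>0$. Hence $|z+it|>|z-it|$ for $t>0$, and the absolute value may be dropped. Multiplying by $2\chi(a)$ and summing over $0<a<f/2$, \eqref{higher:ell-pair} turns the integrand into $\ell_f(t)$. This matches Step 1 and proves \eqref{higher:lift-eq}.

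\emph{Main obstacle.} The delicate point is justifying Fubini in Step 1. I would apply it to the logarithms of numerator and denominator of $\mathcal R_{f,n}$ separately, each being a polynomial over $\Q$ up to the harmless factors $(1+x_j)$, so each logarithm is integrable on the torus.

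I also need absolute convergence of $\int p_d\,\ell_f$ and of the $M_n$, $M_{n-1}$ integrals, which requires care at the logarithmic singularities and at $0$ and $\infty$. For these I would use $p_d(t)\le 1/(\pi t)$, the fact that $p_d$ is a probability density, $\ell_f(t)=\ell_f(1/t)$, and the bound $|\ell_f(t)|=O(\min\{t,1/t\})$ coming from \eqref{higher:ell-pair}.
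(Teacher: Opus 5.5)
Your proposal is correct and is essentially the paper's proof. Step~1 coincides with it, and your Jensen computation in $x_n$, which gives $\log\max\{|z+is|,|z-is|\}=\log\bigl|z+i|s|\bigr|$ for $\operatorname{Im}z>0$, is just the torus form of the Poisson formula the paper applies to $t_n$; the subtraction against \eqref{higher:kernel-Md} and the termwise match with \eqref{higher:ell-pair} then proceed exactly as in the paper, with your integrability remarks filling in details the paper leaves implicit.
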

\begin{proof}
Since $d=n-1$ is odd, write $T_d=is$ on the unit torus, with
$|s|=\prod_{j=1}^d|t_j|$ in the coordinates of \eqref{higher:real-integral}.
By \eqref{higher:jensen-signed}, integration in $x_n$ gives
$\ell_f(|s|)$. Integrating the remaining variables by
\eqref{higher:kernel-reduction} therefore yields
\[
 m(\mathcal R_{f,n})=\int_0^\infty\ell_f(t)p_d(t)\,\dd t.
\]
For $\operatorname{Im}z>0$, Poisson's formula reads
\[
 \frac1\pi\int_{\R}\frac{\log|z-uv|}{1+v^2}\,\dd v
 =\log\bigl|z+i\,|u|\bigr|\qquad(u\in\R),
\]
where the sign of $u$ is removed by $v\mapsto-v$.
Since $n$ is even, applying Poisson's formula to $t_n$ in
\eqref{higher:real-integral}, with
$u=-i^n t_1\cdots t_d\in\R$, and then using
\eqref{higher:kernel-reduction} gives
\[
 M_n(z)=\int_0^\infty p_d(t)\log|z+it|\,\dd t.
\]
Since $d=n-1$ is odd, $i^d=\pm i$, so
\eqref{higher:kernel-Md} gives
\[
 M_{n-1}(z)=\frac12\int_0^\infty p_d(t)
 \bigl(\log|z+it|+\log|z-it|\bigr)\,\dd t.
\]
Subtracting, we obtain
\[
 M_n(z)-M_{n-1}(z)
 =\frac12\int_0^\infty p_d(t)
 \log\left|\frac{z+it}{z-it}\right|\,\dd t.
\]
Setting $z=\zeta^a$, multiplying by $2\chi(a)$, and summing over
$0<a<f/2$ with $\chi(a)\ne0$ proves the claim by
\eqref{higher:ell-pair}.
\end{proof}

The terms \(M_{n-1}(\zeta^a)\) in \eqref{higher:lift-eq} are Mahler measures of functions with algebraic coefficients. The following norm construction shows that these values belong to \(\mathcal M_n\). 
Let $E$ be a totally real number field and
$B\in E(x_1,\ldots,x_d)^\times$, with
$d\ge0$, and suppose $0<b_\sigma\le|B^\sigma|\le C_\sigma<\infty$
almost everywhere on the unit torus. For a chosen embedding $\sigma_0$,
weak approximation provides $a\in E$ satisfying
\[
 a^{\sigma_0}>\max\{1,b_{\sigma_0}^{-1}\},\qquad
 0<a^\sigma<\min\{1,C_\sigma^{-1}\}\quad(\sigma\ne\sigma_0).
\]
Jensen's formula in $v$ gives
\begin{equation}\label{higher:bounded-descent}
 m\left(\frac{\Norm_{E/\Q}(v-aB)}{\Norm_{E/\Q}(v-a)}\right)
   =m(B^{\sigma_0})\in\M_{d+1}.
\end{equation}
Only $\sigma_0$ contributes to either norm, and the quotient cancels
the term $\log a^{\sigma_0}$.
We apply this construction to a bounded quotient. For a nonreal
root of unity $z\ne\pm i$, let
$c=(z+z^{-1})/2\in E=\Q(z+z^{-1})$, so that
$m(T_d^2+2cT_d+1)=2M_d(z)$ by coefficient conjugation.
Choose $\sigma_0$ to be the inclusion $E\subset\R$.
To obtain a quotient bounded above and away from zero at every
embedding, set $B=(T_d^2+2cT_d+1)/(1+(-1)^dT_d^2)$.
The moduli $|B^\sigma|$ lie almost everywhere
between $1-|c^\sigma|$ and $1+|c^\sigma|$ for even $d$, and between
$|c^\sigma|$ and $1$ for odd $d$, with positive lower bounds since
$0<|c^\sigma|<1$; the poles of $T_d$ form a set of measure zero.
By \eqref{higher:bounded-descent}, $m(B)\in\M_{d+1}$, while
$m(1+(-1)^dT_d^2)\in\M_d\subseteq\M_{d+1}$.
Adding these measures gives $2M_d(z)$, hence
\begin{equation}\label{higher:Md-descent}
M_d(z)\in\M_{d+1}\qquad(d\ge1, z\neq\pm i).
\end{equation}

We next use Lalín's kernel formulas to relate \(M_d(z)\) to normalized polylogarithmic values. 
For $n\geq1$, set $A_n(z)=\Li_n(z)-\Li_n(-z)$. At a nonreal root
of unity define the normalized periods
\[
B_n(z)=\begin{cases}
\pi^{1-n}\operatorname{Im}A_n(z),&n\text{ even},\\
\pi^{1-n}\operatorname{Re}A_n(z),&n\text{ odd}.
\end{cases}
\]
We first establish identities on half-planes, then specialize to
roots of unity.
\begin{proposition}\label{higher:triangular}
For each integer $r\geq1$ there are rational numbers $c_{rj}$ and
$d_{rj}$, $1\leq j\leq r$, such that the following identities hold:
\eqref{higher:even-triangle} for $\operatorname{Im}z>0$, and
\eqref{higher:odd-triangle} for $\operatorname{Re}z>0$.
\begin{align}
M_{2r}(z)&=\tfrac12\log|1-z^2|
 +\sum_{j=1}^r c_{rj}\pi^{1-2j}\operatorname{Im}\mathcal V_{2j}(z),
 &&c_{rr}=2^{2r-2},\label{higher:even-triangle}\\
M_{2r+1}(z)&=\log|1+z|
 +\sum_{j=1}^r d_{rj}\pi^{-2j}\operatorname{Re}\mathcal U_{2j+1}(z),
 &&d_{rr}=2^{2r-1},\label{higher:odd-triangle}
\end{align}
where 
\begin{align*}
\mathcal V_n(z)&=\sum_{h=0}^{n-1}\frac{(-\Log(-iz))^h}{h!}A_{n-h}(z),
 &&\operatorname{Im}z>0,\\
\mathcal U_n(z)&=\sum_{h=0}^{n-1}\frac{(-\Log z)^h}{h!}A_{n-h}(z),
 &&\operatorname{Re}z>0.
\end{align*}
For \(\mathcal U_n\), the jumps of the polylogarithms across the branch cut \([1,\infty)\) cancel when \(n\ge2\), so the combination extends analytically across the cut. Here $\Log$ is
the principal logarithm, and the polylogarithms are continued from the
unit disk. The displayed values of $c_{rr}$ and $d_{rr}$ show that
both systems have nonzero diagonal coefficients.
\end{proposition}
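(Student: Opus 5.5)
The plan is to compute $M_d(z)$ via the kernel representation and expand the logarithm in powers of the product variable, following Lalín's approach in \cite{Lalin03,Lalin06}. Start from \eqref{higher:real-integral} with $d=2r$, so $i^d=(-1)^r$. The integrand is $\log|z+(-1)^r t_1\cdots t_{2r}|$. Apply Poisson's formula in one variable $t_{2r}$. For $\operatorname{Im}z>0$ this replaces the integrand by $\log\bigl|z+i\,|t_1\cdots t_{2r-1}|\bigr|$, up to the sign convention. Then \eqref{higher:kernel-reduction} gives
\[
 M_{2r}(z)=\int_0^\infty p_{2r-1}(t)\log|z+it|\,\dd t .
\]
In the same way, for $d=2r+1$ with $\operatorname{Re}z>0$, we use $i^{2r}t_{2r+1}$ real and apply Poisson in $t_{2r+1}$ along the real direction. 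This yields $M_{2r+1}(z)=\int_0^\infty p_{2r}(t)\log|z+t|\,\dd t$, using $\log|z+t|=\log|z-(-t)|$ with $\operatorname{Re}z>0$.

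The next step is to make the kernels explicit. By induction via \eqref{higher:kernel-definition}, the Mellin transform of $p_1$ is $\int_0^\infty t^{s-1}p_1(t)\,\dd t=1/\cos(\pi(s-1)/2)$. Hence $p_d$ has Mellin transform $\sec^d(\pi(s-1)/2)$. Residue calculus then gives a closed form
\[
 p_d(t)=\frac{1}{1-t^2}\sum_{h}\alpha_{d,h}\,\pi^{-h}\log^h t ,
\]
with rational $\alpha_{d,h}$ (times an overall power of $\pi$). The leading term in $\log^{d-1}t$ has a coefficient read off from the highest-order pole of $\sec^d$. This reproduces Lalín's formulas for $p_{2r-1}$ (odd powers of $\log$ with $\pi$ weights) and $p_{2r}$.

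With these kernels, reduce the problem to the basic integrals
\[
 I_h(w)=\int_0^\infty \frac{\log^h t}{1-t^2}\log\Bigl|1+\frac{t}{w}\Bigr|\dd t,
\]
taken with $w=-iz$ (even case) or $w=z$ (odd case), after separating off $\log|z|$. The term $\log|z|$ combines with the total mass of the kernel to give the elementary terms $\tfrac12\log|1-z^2|$ and $\log|1+z|$; this is the $h=0$ contribution. For these integrals, I would differentiate in $w$ and use partial fractions $\frac{1}{(1-t^2)(t+w)}$. Then apply $\int_0^\infty \frac{\log^h t}{t+w}\cdots$, and the standard formula
\[
 \int_0^1\frac{\log^{h}t}{1\pm t w'}\,\dd t\propto \Li_{h+1}(\mp w'),
\]
after splitting at $t=1$ and applying $t\mapsto1/t$. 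This expresses $I_h(w)$ as real or imaginary parts of $\sum_{j}\frac{(-\Log w)^{j}}{j!}A_{h+1-j}(z)$, up to Bernoulli-type terms with $\pi^{2m}$ and rational coefficients. The antisymmetry under $t\mapsto 1/t$ is what produces $A_n=\Li_n(z)-\Li_n(-z)$. Collecting by weight then gives a triangular sum over $\mathcal V_{2j}$ and $\mathcal U_{2j+1}$ with rational coefficients after pulling out $\pi^{1-2j}$ and $\pi^{-2j}$. Lower-order $\pi$-power cross terms must reassemble into the lower-index $\mathcal V$ and $\mathcal U$; I would verify this by checking that both sides are harmonic on the half-plane with matching boundary behavior.

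The main obstacle is this reassembly. Specifically, one must show that the mixed terms involving powers of $\Log(-iz)$ or $\Log z$ combine exactly into the $\mathcal V_n$ and $\mathcal U_n$ combinations. One must also confirm that no stray non-rational constants survive. I would first try a cleaner route: view $M_d(z)$ as a function of $z$, compute $z\partial_z M_d$ directly from the kernel integral, and match it with $z\partial_z$ of the right-hand side. Since $z\partial_z\mathcal V_n$ telescopes to $\frac{(-\Log(-iz))^{n-1}}{(n-1)!}\,z\partial_z A_1(z)$-type terms, the derivative comparison reduces to the Mellin identity for $p_d$. Equality then follows up to a constant, which is fixed by letting $z\to i\infty$ (respectively $z\to+\infty$), where both sides behave like $\log|z|$. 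The diagonal values $c_{rr}=2^{2r-2}$ and $d_{rr}=2^{2r-1}$ come from the leading coefficient of the top power of $\log t$ in $p_{2r-1}$ and $p_{2r}$, that is, from the leading Laurent coefficient of $\sec^d$.
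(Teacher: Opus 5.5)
Your fallback ``cleaner route'' is the paper's proof. Set $F_r(q)=\int_0^\infty p_{2r-1}(t)\Log(q+t)\,\dd t$ and $G_r(z)=\int_0^\infty p_{2r}(t)\Log(z+t)\,\dd t$. Then $M_{2r}(z)=\operatorname{Re}F_r(-iz)$ and $M_{2r+1}(z)=\operatorname{Re}G_r(z)$, and one compares holomorphic derivatives with $\mathcal V_n'(z)=2(-\Log(-iz))^{n-1}/((n-1)!(1-z^2))$ and $\mathcal U_{2j+1}'(z)=2(\Log z)^{2j}/((2j)!(1-z^2))$. Your telescoping claim is exactly this, so the direct $I_h(w)$ expansion can be dropped.

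The real difference is how you evaluate $\int_0^\infty p_d(t)\,\dd t/(t+q)$. The paper quotes Lal\'in's closed forms for $p_d$ and differentiates beta integrals in $s$, using a principal value for $p_{2r}$. Your Mellin idea, made precise by Parseval with $\int_0^\infty t^{s-1}p_d(t)\,\dd t=\sin^{-d}(\pi s/2)$, gives
\[
\int_0^\infty\frac{p_d(t)\,\dd t}{t+q}=\frac1{2\pi i}\int_{\operatorname{Re}s=c}\frac{\pi q^{-s}\,\dd s}{\sin^d(\pi s/2)\sin(\pi s)}\qquad(0<c<1).
\]
Shifting left, the simple poles at odd integers give the elementary terms $q/(1+q^2)$ and $1/(1+z)$; for even $d$ the constant term at $s=0$ also contributes. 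The pole at $0$ gives $(1\pm q^2)^{-1}$ times a polynomial in $\Log q/\pi$. This treats both parities uniformly, needs no explicit kernels, and yields
\[
c_{rj}=2^{2j-2}\operatorname{Res}_{u=0}\frac{u^{2j-1}}{\sin^{2r}u\cos u},\qquad d_{rj}=2^{2j-1}\operatorname{Res}_{u=0}\frac{u^{2j}}{\sin^{2r+1}u\cos u}.
\]
So rationality and the diagonal values are immediate, and the substitution $v=\sin u$ even gives \eqref{higher:c-generating}. The evenness or oddness of $1/(\sin^{d+1}u\cos u)$ is what restricts the sums to $\mathcal V_{2j}$ and $\mathcal U_{2j+1}$. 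You should say this explicitly, because your description of the kernels has the parity backwards. In fact $p_{2r-1}$ contains only even powers of $\log t$ and has denominator $1+t^2$, not $1-t^2$, while $p_{2r}$ contains only odd powers.

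The one step that is not adequate as written is the normalization. Knowing that both sides grow like $\log|z|$ does not fix the additive constant. You need $\operatorname{Im}\mathcal V_{2j}(z)\to0$ as $z\to i\infty$ and $\operatorname{Re}\mathcal U_{2j+1}(x)\to0$ as $x\to+\infty$. Both limits are true, but proving them requires the inversion formula for $\Li_n$ and a Bernoulli-polynomial cancellation. Already for $j=1$ you need $\operatorname{Im}\Li_2(iy)-\operatorname{Im}\Li_2(i/y)=\frac\pi2\log y$.

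Moreover, $x\to+\infty$ runs along the cut $[1,\infty)$. There $\mathcal U_{2j+1}$ is defined only through the analytic continuation asserted in the statement, and your proposal never proves that part. The paper does: the jumps $2\pi i(\log x)^{m-1}/(m-1)!$ of $\Li_m$ cancel in $\mathcal U_n$ because $\sum_{h=0}^{n-1}(-1)^h/(h!(n-1-h)!)=0$ for $n\ge2$.

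The paper avoids all of this by letting $z\to0$. Since $A_m(z)=O(|z|)$, every term of $\mathcal V_n$ and $\mathcal U_n$ tends to $0$, and so do the elementary terms. Meanwhile $M_{2r}(z)$ and $M_{2r+1}(z)$ tend to $\int_0^\infty p_{2r-1}(t)\log t\,\dd t$ and $\int_0^\infty p_{2r}(t)\log t\,\dd t$, both zero because $p_m(1/t)=t^2p_m(t)$. Dominated convergence is justified by $|z+it|\ge t$, respectively $|z+t|\ge t$. With that normalization, the continuation argument for $\mathcal U_n$, and the Mellin evaluation above, your argument is complete.
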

\begin{proof}
We use Lal\'in's formulas for the kernels $p_d$ and derive the
triangular identities in the normalization needed here.
Specifically, \cite[Section 5, Theorem 17]{Lalin06}, with the
normalization in \eqref{higher:kernel-definition}, gives
\begin{align*}
p_{2r-1}(t)&=\frac{E_r(\log t/\pi)}{\pi(1+t^2)},&
E_r(X)&=\frac{2^{2r-1}}{(2r-2)!}
\prod_{a=1}^{r-1}\left(X^2+(a-\tfrac12)^2\right),\\
p_{2r}(t)&=\frac{O_r(\log t/\pi)}{\pi(t^2-1)},&
O_r(X)&=\frac{2^{2r}}{(2r-1)!}X
\prod_{a=1}^{r-1}(X^2+a^2).
\end{align*}
The apparent singularity in the second expression at \(t=1\) is removable. These formulas imply
that all logarithmic moments converge absolutely and that
$p_m(1/t)=t^2p_m(t)$; in particular,
\begin{equation}\label{higher:kernel-log}
 \int_0^\infty p_m(t)\log t\,\dd t=0.
\end{equation}

We prove the identities by comparing derivatives. Set
\[
 F_r(q)=\int_0^\infty p_{2r-1}(t)\Log(q+t)\,\dd t,\qquad
 G_r(z)=\int_0^\infty p_{2r}(t)\Log(z+t)\,\dd t.
\]
Poisson's formula gives $M_{2r}(z)=\operatorname{Re}F_r(-iz)$
and $M_{2r+1}(z)=\operatorname{Re}G_r(z)$ in the stated half-planes.
Define the rational coefficients by
\begin{equation}\label{higher:coefficients}
 c_{rj}=\left.\frac12E_r(\partial_u)
              \frac{u^{2j-1}}{\sin u}\right|_{u=0},\qquad
 d_{rj}=\left.\frac12O_r(\partial_u)
              \frac{u^{2j}}{\sin u}\right|_{u=0}.
\end{equation}
Here $\partial_u=d/du$; both quotients are analytic at zero.

To generate the powers of $\log t$ in the kernels, we differentiate
with respect to $s$. For $\operatorname{Re}q,\operatorname{Re}z>0$,
partial fractions and the beta integral give
\begin{align*}
 \int_0^\infty\frac{t^s\,\dd t}{(t+q)(1+t^2)}
 &=\frac\pi{1+q^2}\left(-\frac{q^s}{\sin\pi s}
   +\frac{q}{2\cos(\pi s/2)}+\frac1{2\sin(\pi s/2)}\right),\\
 \operatorname{PV}\int_0^\infty\frac{t^s\,\dd t}{(t+z)(t^2-1)}
 &=\frac\pi{z^2-1}\left(-\frac{z^s}{\sin\pi s}
   +\frac z2\tan(\pi s/2)+\frac12\cot(\pi s/2)\right).
\end{align*}
Here $\operatorname{PV}$ denotes the Cauchy principal value.
The first identity holds initially for $-1<\operatorname{Re}s<0$;
both right-hand sides have removable singularities at $s=0$.
Near $t=1$, subtracting $1/[2(1+z)(t-1)]$ from the second integrand
leaves a bounded remainder, uniformly for $s$ near zero and $z$ in compact
subsets of the right half-plane. Its positive-order $s$-derivatives are
locally integrable, since $(\log t)^j/(t^2-1)=O(|t-1|^{j-1})$ for $j\ge1$;
the tails at zero and infinity are uniformly integrable as well.
Thus differentiation under the principal-value integral is valid.
Applying $\pi^{-1}E_r(\pi^{-1}\partial_s)$ and
$\pi^{-1}O_r(\pi^{-1}\partial_s)$, respectively, and evaluating
at $s=0$ yields
\begin{align*}
 F_r'(q)&=\frac1{1+q^2}
 \left(q-2\sum_{j=1}^r\frac{c_{rj}}{(2j-1)!}
                  \left(\frac{\Log q}{\pi}\right)^{2j-1}\right),\\
 G_r'(z)&=\frac1{1+z}
 +\frac2{1-z^2}\sum_{j=1}^r\frac{d_{rj}}{(2j)!}
                  \left(\frac{\Log z}{\pi}\right)^{2j}.
\end{align*}
The elementary terms $q/(1+q^2)$ and $1/(1+z)$ arise from
the same differentiation of the trigonometric terms in the integral
identities. The apparent singularity of $G_r'(z)$ at $z=1$ is removable.

On the other hand, differentiation of the finite polylogarithmic sums gives
\[
 \mathcal V_n'(z)=\frac{2(-\Log(-iz))^{n-1}}{(n-1)!(1-z^2)},\qquad
 \mathcal U_{2j+1}'(z)=\frac{2(\Log z)^{2j}}{(2j)!(1-z^2)}.
\]
For the odd-dimensional identity, we first check continuation
across the cut in the right half-plane. For $x>1$, the upper-minus-lower jump of $\Li_m(x)$ is
$2\pi i(\log x)^{m-1}/(m-1)!$, while $\Li_m(-z)$ is analytic there.
Thus the jump of $\mathcal U_n$ is
$2\pi i(\log x)^{n-1}\sum_{h=0}^{n-1}(-1)^h/[h!(n-1-h)!]=0$
for $n\ge2$; the derivative formula also shows that the singularity
at $1$ is removable. Comparing derivatives determines the two identities up to additive constants. These constants are fixed by taking limits at zero and using \eqref{higher:kernel-log}. 
The logarithmic moment bounds justify differentiation on compact
subsets and the limits at zero. Finally, the leading terms of $E_r$
and $O_r$ in \eqref{higher:coefficients} give
$c_{rr}=2^{2r-2}$ and $d_{rr}=2^{2r-1}$.
\end{proof}

At roots of unity, the logarithmic factors in these triangular identities are rational multiples of \(i\pi\). Combining the identities with the preceding descent argument gives the following lemma.
\begin{lemma}\label{higher:low-weight}
For every nonreal root of unity $z$ and every $n\geq1$,
$B_n(z)\in\M_{n+1}$. For even $n$, we also have
$B_n(\pm i)\in\M_n$ and, for every nonreal root of unity $z$
with $\operatorname{Im}z>0$,
\begin{equation}\label{higher:congruence}
 M_n(z)\equiv2^{n-2}B_n(z)\pmod{\M_n}.
\end{equation}
\end{lemma}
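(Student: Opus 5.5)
Evaluate the triangular identities of Proposition~\ref{higher:triangular} at roots of unity, where all logarithms are rational multiples of $i\pi$. The resulting systems are triangular in the normalized periods $B_m$ with nonzero diagonal, so they can be inverted by induction on the weight. The descent \eqref{higher:Md-descent} puts $M_d(z)$ in $\M_{d+1}$, and every other term will be shown to lie in the appropriate space.

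First the even case. For a nonreal root of unity $z$ with $\operatorname{Im}z>0$, write $-iz=e^{i\phi}$ with $\phi\in\pi\Q$ and $|\phi|<\pi/2$, so that $-\Log(-iz)=-i\phi$. Then
\[
\pi^{1-2j}\operatorname{Im}\mathcal V_{2j}(z)=\sum_h \frac{(-\phi/\pi)^h}{h!}\,\pi^{1-2j+h}\operatorname{Im}\bigl(i^{-h}A_{2j-h}(z)\bigr).
\]
For $h$ even this term is $\pm\operatorname{Im}A$, and for $h$ odd it is $\pm\operatorname{Re}A$. Since $2j-h$ is even exactly when $h$ is even, each term is a rational multiple of $B_{2j-h}(z)$. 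The same computation for $\mathcal U_{2j+1}$, using $\Log z=i\theta$ with $\operatorname{Re}z>0$, expresses $\pi^{-2j}\operatorname{Re}\mathcal U_{2j+1}(z)$ as $B_{2j+1}(z)$ plus a rational combination of lower $B_m(z)$. The leading term ($h=0$) has coefficient $1$ in both cases. Hence, by \eqref{higher:even-triangle} and \eqref{higher:odd-triangle},
\[
M_{2r}(z)=\tfrac12\log|1-z^2|+2^{2r-2}B_{2r}(z)+\textstyle\sum_{m<2r}q_mB_m(z),
\]
with the analogous identity for $M_{2r+1}$ carrying leading coefficient $2^{2r-1}$. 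The term $\log|1-z^2|$ (respectively $\log|1+z|$) is a rational multiple of $M_1$-type quantities; indeed $\tfrac12\log|1-z^2|=\pm\tfrac12 B_1$-combinations, since $B_1(z)=\operatorname{Re}A_1(z)=\log|(1+z)/(1-z)|$. So it can be absorbed, or shown to lie in $\M_1$ via Jensen on norms of cyclotomic expressions, using \eqref{higher:bounded-descent} if needed.

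Next comes induction on $n$ to prove $B_n(z)\in\M_{n+1}$. For $n=1$, $B_1(z)$ is a difference of logarithms of $|1\pm z|$. These are Mahler measures in one variable of norms from $\Q(z+z^{-1})$, handled by \eqref{higher:bounded-descent} with $d=0$, and they lie in $\M_1\subseteq\M_2$. For the inductive step, $M_n(z)\in\M_{n+1}$ by \eqref{higher:Md-descent}, and the lower $B_m(z)$ lie in $\M_{m+1}\subseteq\M_n$. Dividing by the nonzero diagonal coefficient gives $B_n(z)\in\M_{n+1}$.

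There is a half-plane issue. For even $n$ the identity requires $\operatorname{Im}z>0$, and for odd $n$ it requires $\operatorname{Re}z>0$. The cases $\operatorname{Im}z<0$ or $\operatorname{Re}z<0$ are reduced using $B_n(\bar z)=\pm B_n(z)$ and $A_n(-z)=-A_n(z)$. For odd $n$ and $\operatorname{Re}z=0$ we have $z=\pm i$, and then $A_n(i)$ is purely imaginary for odd $n$, so $B_n(\pm i)=0$.

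Finally, the congruence \eqref{higher:congruence} and the statement $B_n(\pm i)\in\M_n$ for even $n$. Here $M_n(z)$ itself lies in $\M_n$ only when $z=\pm i$ or $z$ is rational; in general it does not. The relation $M_n\equiv2^{n-2}B_n$ modulo $\M_n$ follows from the even identity once the lower terms $B_m(z)$, $m\le n-1$, are known to lie in $\M_m\subseteq\M_n$ by the first part, and once $\log|1-z^2|\in\M_1$. For $z=i$ the polynomial $i+T_n$ has norm $1+T_n^2$ over $\Q$, so $2M_n(i)=m(1+T_n^2)\in\M_n$. The congruence then gives $B_n(i)\in\M_n$, and $B_n(-i)=\pm B_n(i)$.

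The main obstacle I expect is bookkeeping. One must check that each mixed term $\pi^{1-n+h}\operatorname{Im}/\operatorname{Re}(i^{\pm h}A_{n-h})$ is exactly a rational multiple of $B_{n-h}$, with matching parity, and that every weight-one and boundary term genuinely lies in the required $\M$ space rather than only in $\M_{n+1}$.
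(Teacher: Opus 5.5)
Your proposal is correct and follows the paper's proof. At roots of unity the logarithms in $\mathcal V_m$ and $\mathcal U_m$ are rational multiples of $i\pi$, so Proposition~\ref{higher:triangular} becomes triangular in the $B_m(z)$ with nonzero diagonal; induction on the weight, using \eqref{higher:Md-descent} and $\log|1\pm z|\in\M_1$ (from \eqref{higher:bounded-descent} with $d=0$), then gives $B_n(z)\in\M_{n+1}$ and \eqref{higher:congruence}.

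The only real difference is at $z=\pm i$. There the paper cites Lal\'in's triangular formula, while you use $2M_n(i)=m(1+T_n^2)\in\M_n$ together with the congruence at $z=i$. That is a valid self-contained substitute, and it also supplies the case $z=\pm i$ that \eqref{higher:Md-descent} excludes, which your induction step otherwise glosses over.

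Two asides should be removed. The claim that $\tfrac12\log|1-z^2|$ is a combination of $B_1(z)$ is false: at $z=i$ it equals $\tfrac12\log 2$, while $B_1(i)=0$. Rely instead on the bounded-descent alternative you also give. The claim that $M_n(z)\notin\M_n$ in general is unproved and not needed.
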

\begin{proof}
Let $E=\Q(z+z^{-1})$. The square of the modulus of each nonzero
element $u$ of $\Q(z)$ is totally positive in $E$. Applying
\eqref{higher:bounded-descent} to $|u|^2$ with $d=0$ and using
$\log|u|=\tfrac12\log|u|^2$ gives $\log|u|\in\M_1$. In particular,
$B_1(z)=\log|(1+z)/(1-z)|$ belongs to $\M_1$, as do the
elementary logarithmic terms in Proposition~\ref{higher:triangular}.

Suppose first that $z\neq\pm i$. In the upper half-plane,
$\Log(-iz)/(i\pi)\in\Q$; in the right half-plane,
$\Log z/(i\pi)\in\Q$. Hence, for even $m$,
$\pi^{1-m}\operatorname{Im}\mathcal V_m(z)$ equals $B_m(z)$ plus a rational
linear combination of $B_j(z)$ with $j<m$. For odd $m\geq3$, the same
assertion holds with
$\pi^{1-m}\operatorname{Re}\mathcal U_m(z)$ in the right half-plane.
In fact, each power of the logarithm supplies the necessary power of
$\pi$, and multiplication by a power of $i$ selects the real or
imaginary part prescribed in $B_j$.

For even $n$, conjugation reduces the claim to the upper half-plane;
for odd $n$, $A_n(-z)=-A_n(z)$ reduces it to the right half-plane.
We now induct on $n$. Equation~\eqref{higher:Md-descent} places $M_n(z)$
in $\M_{n+1}$, while the induction hypothesis places every lower-weight term  $B_j(z)$ in
$\M_{j+1}\subseteq\M_n$. The nonzero rational diagonal
coefficient in \eqref{higher:even-triangle} or
\eqref{higher:odd-triangle} therefore gives
$B_n(z)\in\M_{n+1}$ for all roots under consideration.

For $z=\pm i$, every odd $B_n(z)$ vanishes, while
$B_{2r}(i)=2L(\chi_{-4},2r)/\pi^{2r-1}$.
Lal\'in's triangular formula \cite[Theorem 1(i), equation (2)]{Lalin06}
gives $B_{2r}(\pm i)\in\M_{2r}$ by elimination of lower weights.
Finally, in even weight $n$,
all terms in \eqref{higher:even-triangle} except
$2^{n-2}B_n(z)$ belong to $\M_n$ by the preceding argument.
This proves \eqref{higher:congruence}.
\end{proof}

\begin{proof}[Proof of Theorem~\ref{thm:weak-higher}]
Let $n=2k$ and $\chi=\chi_{-f}$.

For $f\ne4$, Proposition~\ref{higher:lift} gives
\begin{equation}\label{higher:S-realized}
\begin{aligned}
S_{f,n}&:=\sum_{0<a<f/2}\chi(a)M_n(\zeta^a)\\
&=\tfrac12m(\mathcal R_{f,n})
 +\sum_{0<a<f/2}\chi(a)M_{n-1}(\zeta^a)\in\M_n,
\end{aligned}
\end{equation}
where the last inclusion follows from \eqref{higher:Md-descent}.
The quadratic Gauss sum $\sum_a\chi(a)\zeta^{ah}=i\sqrt f\chi(h)$,
valid for every integer $h$, implies
\[
\sum_{a\bmod f}\chi(a)A_n(\zeta^a)
=2i\sqrt f\,(1-\chi(2)2^{-n})L(\chi,n).
\]
The defining series are absolutely convergent here. Pairing conjugate roots,
and letting $\rho_{f,n}=\sqrt f\,L(\chi,n)/\pi^{n-1}$, we obtain
\[
\sum_{0<a<f/2}\chi(a)B_n(\zeta^a)
=(1-\chi(2)2^{-n})\rho_{f,n}.
\]
Thus Lemma~\ref{higher:low-weight} and \eqref{higher:S-realized} show
$
2^{n-2}(1-\chi(2)2^{-n})\rho_{f,n}\in\M_n.
$
The coefficient \(2^{n-2}(1-\chi(2)2^{-n})\) is a nonzero rational number. This proves
$\rho_{f,n}\in\M_n$ for $f\ne4$.
For $f=4$, the identity
$A_n(i)=2iL(\chi_{-4},n)$ gives
$\rho_{4,n}=B_n(i)\in\M_n$ by
Lemma~\ref{higher:low-weight}.

The functional equation for the primitive odd quadratic character
$\chi$ \cite[Section 25.15]{DLMF} gives
\begin{equation}\label{higher:functional}
L'(\chi,1-2k)=
\frac{(-1)^{k-1}(2k-1)!f^{2k-1}}{2^{2k}}
\frac{\sqrt f\,L(\chi,2k)}{\pi^{2k-1}}.
\end{equation}
Write $\rho_{f,n}=\sum_{j=1}^s a_jm(\Phi_j)$ with $a_j\in\Q$ and
$\Phi_j\in\Q(x_1,\ldots,x_n)^\times$, adjoining unused variables if necessary.
Choose $N\in\Z_{>0}$ such that every $Na_j$ is an integer, and let
\[
 R_{f,n}=\prod_{j=1}^s \Phi_j^{Na_j}\in\Q(x_1,\ldots,x_n)^\times.
\]
Then $m(R_{f,n})=N\rho_{f,n}$, so \eqref{higher:functional} gives
the required identity with
$r_{f,2k}=(-1)^{k-1}2^{2k}N/
((2k-1)!f^{2k-1})\in\Q^\times$.
\end{proof}

\section{Bivariate polynomial realizations via Bloch cycles}\label{sec:cycles}

We now construct a rational function whose Mahler measure is an explicit multiple of the regulator of a Bloch cycle. Under a sign condition on the cycle, this function is a polynomial. The geometric cycles of \cite{BDGRY} satisfy this condition and yield Theorem~\ref{thm:weak}.

The Bloch--Wigner dilogarithm is
\[
  D(z)=\operatorname{Im}\operatorname{Li}_2(z)
        +\log|z|\Arg(1-z),
\]
with its usual continuous extension. Throughout, $\Arg$ denotes the
principal argument in $(-\pi,\pi]$. We use the standard identities
$D(\bar z)=-D(z)$ and $D(z)=0$ for real $z$, and the fact that
$D(z)>0$ in the upper half-plane. These conventions agree with
\cite[Section 3.1.2]{BDGRY}.

Fix an imaginary quadratic field $K\subset\C$. Let $\mu(K)$ denote
the group of roots of unity in $K$, and let
\[
  w=|\mu(K)|=
  \begin{cases}
    6,& K=\Q(\sqrt{-3}),\\
    4,& K=\Q(\sqrt{-1}),\\
    2,& \text{otherwise}.
  \end{cases}
\]
We write
\[
  \V=K^\times\otimes_{\Z}\Q,\qquad \cl{u}=u\otimes1.
\]
Let $\Z[K\setminus\{0,1\}]$ be the free abelian group on the formal
symbols $[z]$. For a chain
$\beta=\sum_{i=1}^r n_i[z_i]\in\Z[K\setminus\{0,1\}]$, define
\[
  \delta\beta=\sum_{i=1}^r n_i\cl{z_i}\wedge\cl{1-z_i}
       \in\bigwedge^2_{\Q}\V,
  \qquad D(\beta)=\sum_{i=1}^r n_iD(z_i).
\]
We call $\beta$ a Bloch cycle if $\delta\beta=0$.
We work with representatives at the level of chains; no quotient
by the five-term relations is needed for the realization theorem.

The following family of quadratic polynomials will serve as the building blocks of the construction. For \(A,B\in\Q_{>0}\), set
\[
  Q_{A,B}(x,y)=(1+Ax-By)^2-4Ax\in\Q[x,y].
\]

\begin{theorem}\label{thm:realization}
Let $\beta=\sum_i n_i[z_i]$ be a Bloch cycle over $K$
with every $z_i\notin\R$. Set
\[
  A_i=|z_i|^2,\qquad B_i=|1-z_i|^2,\qquad
  \eps_i=\sgn(\operatorname{Im}z_i),\qquad
  H_\beta=\prod_i Q_{A_i,B_i}^{\,n_i\eps_i}.
\]
Then $A_i,B_i\in\Q_{>0}$ and $H_\beta\in\Q(x,y)^\times$.
There is an explicitly computable $c_\beta\in\Q_{>0}$ such that
\begin{equation}\label{eq:realization}
  R_\beta=c_\beta H_\beta^w,
  \qquad m(R_\beta)=\frac{4w}{\pi}D(\beta).
\end{equation}
If $n_i\eps_i\geq0$ for every $i$, then $R_\beta\in\Q[x,y]$.
\end{theorem}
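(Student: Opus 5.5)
The plan is to compute $m(Q_{A,B})$ in closed form, sum the resulting formulas with weights $n_i\eps_i$, and then use the cycle condition $\delta\beta=0$ to show that the logarithmic corrections collapse to a rational combination of $\log p$. That combination is then removed by raising to the power $w$ and multiplying by a rational constant.

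\textbf{Rationality of the data.} For $z\in K\setminus\R$ we have $A=|z|^2=\Norm_{K/\Q}(z)$ and $B=|1-z|^2=\Norm_{K/\Q}(1-z)$. Both are positive rationals, so $H_\beta\in\Q(x,y)^\times$ is immediate.

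\textbf{Closed form for $m(Q_{A,B})$.} Maillot's formula, recalled in the introduction, gives
\[
 m(Q_{A,B})=\frac{4}{\pi}D(z)+(\text{elementary terms in }\log|z|,\log|1-z|,\Arg z,\Arg(1-z)),
\]
where $z$ is the point in the upper half-plane with $|z|^2=A$ and $|1-z|^2=B$. I would verify this directly. View $Q_{A,B}$ as a quadratic in $y$ with roots $y=(1+Ax\pm2\sqrt{Ax})/B=(1\pm\sqrt{Ax})^2/B$, and apply Jensen's formula in $y$. This reduces the problem to a one-variable integral over $x=e^{i\theta}$ of $\log^+$ of $|1\pm\sqrt A e^{i\theta/2}|^2/B$. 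The integrand is supported on the arc where $|1+\sqrt A e^{i\phi}|>\sqrt B$, whose endpoints correspond to the triangle with side lengths $1,\sqrt A,\sqrt B$, i.e.\ to $z$. The standard formula $\int\log|1-re^{i\phi}|\,\dd\phi$ in terms of $D$ (the Cassaigne--Maillot computation) then produces $\frac{4}{\pi}D(z)$. The elementary part comes out as $\frac{1}{\pi}\bigl(\Arg(z)\log|1-z|^{2}-\Arg(1-z)\log|z|^{2}\bigr)$ up to sign conventions, together possibly with a term $\log\max(1,\cdot)$ that I would check vanishes or is itself of this type.

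Because $Q_{A,B}$ depends only on $|z|,|1-z|$, replacing $z_i$ by its conjugate when $\eps_i=-1$ gives $m(Q_{A_i,B_i})=\eps_i\frac4\pi D(z_i)+\eps_i E(z_i)$, where $E$ is odd under conjugation. Hence
\[
 m(H_\beta)=\frac4\pi D(\beta)+\sum_i n_iE(z_i),
\]
and the sign exponents $\eps_i$ are exactly what is needed to make this uniform.

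\textbf{Killing the correction (main obstacle).} The correction has the form $\frac1\pi\sum_i n_i\bigl(\Arg z_i\log|1-z_i|-\Arg(1-z_i)\log|z_i|\bigr)$. This is the image of $\delta\beta$ under the antisymmetric bilinear pairing $\cl u\wedge\cl v\mapsto \Arg u\log|v|-\Arg v\log|u|$. That pairing is not well defined on $\V=K^\times\otimes\Q$, because $\Arg$ is additive only modulo $2\pi$. To control the defect:
\begin{itemize}
 \item Write each $z_i$ and $1-z_i$ in terms of a fixed $\Z$-basis of a finitely generated subgroup $\Gamma\subset K^\times$ modulo torsion, times roots of unity. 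Here $\delta\beta=0$ in $\bigwedge^2\V$ means that in $\bigwedge^2\Gamma$ the class vanishes up to torsion, that is, up to terms $\zeta\wedge\gamma$ with $\zeta\in\mu(K)$. (This is the sense in which the cycle condition forces certain products to be roots of unity.)
 \item Expand the pairing bilinearly. The generic part cancels, and the $2\pi$ defects of $\Arg$ contribute multiples of $2\pi\log|\gamma|$.
 \item The torsion part contributes $\Arg(\zeta)\log|\gamma|\in\frac{2\pi}{w}\Z\log|\gamma|$.
\end{itemize}
After dividing by $\pi$ and using $|\gamma|^2\in\Q_{>0}$, the total correction is $\frac{4}{w}\sum_p k_p\log p$ with $k_p\in\Z$. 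The bookkeeping of the $\Arg$ defects and the factor $2$ from $|\gamma|=\sqrt{\Norm\gamma}$ is the delicate step; I would carry it out on integer representatives in $\Gamma\otimes\Z$ rather than in $\V$.

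\textbf{Conclusion.} We get $m(H_\beta^w)=\frac{4w}{\pi}D(\beta)+4\sum_p k_p\log p$. Set $c_\beta=\prod_p p^{-4k_p}\in\Q_{>0}$; since $m(c)=\log c$ for a constant, this yields \eqref{eq:realization}. Finally, if every $n_i\eps_i\ge0$, then $H_\beta$ is a product of polynomials with nonnegative exponents, so $R_\beta\in\Q[x,y]$.
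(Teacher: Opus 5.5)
Your proposal is correct, and its outer frame matches the paper's. You use Maillot's formula for the triangle $0,1,z$, which is nondegenerate because $z\notin\R$, so no $\log\max$ term arises. The signs $\eps_i$ then give $m(H_\beta)=\frac4\pi\bigl(D(\beta)+C(\beta)\bigr)$, and the correction is absorbed into $c_\beta$. Your Jensen-in-$y$ computation simply reproves $m(Q_{A,B})=4m(1+\sqrt A\,x+\sqrt B\,y)$, which the paper reads off from $Q_{A,B}(x^2,y^2)=\prod_{s,t}(1+s\sqrt A\,x+t\sqrt B\,y)$.

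The genuine difference is how the $2\pi$-ambiguity of $\Arg$ is neutralized. The paper makes the logarithmic side integral. It writes $\log|u|=\frac12\sum_p\nu_p(u)\log p$ with $\Z$-valued $\nu_p$, so that for each prime $\nu_p(v)\Arg u-\nu_p(u)\Arg v$ is bilinear modulo $2\pi$. Contracting $\delta\beta=0$ with $\iota_p$ makes $U_p$ a root of unity, and $e^{iT_p}=U_p/|U_p|$ gives $wT_p\in2\pi\Z$ at once, with a closed formula for each $k_p$ and no choice of basis.

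You instead linearize the argument. Take a finitely generated $\Gamma\subset K^\times$ containing all $z_i,1-z_i$, lift a basis of $\Gamma/\mathrm{tors}$ to $\gamma_1,\ldots,\gamma_r\in\Gamma$, and set $\psi(\gamma_j)=\Arg\gamma_j$. This gives a homomorphism $\psi\colon\Gamma\to\R$ killing torsion with $e^{i(\Arg u-\psi(u))}\in\mu(K)$, that is, $\Arg u-\psi(u)\in\frac{2\pi}{w}\Z$. The form $\psi(u)\log|v|-\psi(v)\log|u|$ extends to an alternating $\Q$-bilinear form on $\Gamma\otimes\Q\subseteq\V$, so it annihilates $\delta\beta$. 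The integer defects then give $C(\beta)=\frac{2\pi}{w}\log|g|$ for a single $g\in K^\times$. You do not actually need the finer statement that $\delta\beta$ vanishes in $\bigwedge^2_{\Z}\Gamma$ up to terms $\zeta\wedge\gamma$.

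Your route makes the regulator-pairing structure explicit, but it requires computing a basis of $\Gamma$ and the defects. The paper's prime-by-prime contraction is more economical, and it is what makes the certified interval computation of the $k_p$ straightforward.

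One bookkeeping slip: the elementary term in $m(Q_{A,B})$ is $\frac4\pi C(z)$, not $\frac2\pi C(z)$ or $\frac1\pi C(z)$ as you write in two places. This is harmless because the rational factor is absorbed into $c_\beta$. However, your final $\frac4w\sum_p k_p\log p$ is consistent only with the $\frac4\pi$ normalization.
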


We first identify the logarithmic correction in Maillot's formula,
then use the cycle condition to remove it.
For a nonreal $z\in K$, write
\begin{equation}\label{eq:local-data}
\begin{gathered}
  \eps_z=\sgn(\operatorname{Im}z),\qquad
  C(z)=\Arg z\log|1-z|-\Arg(1-z)\log|z|.
\end{gathered}
\end{equation}

Maillot's triangle formula \cite[Proposition 7.3.1]{Maillot}, in the
form recorded in \cite[equation (31)]{Lalin03}, gives
\[
  \eps_z\,m(1+|z|x+|1-z|y)=\frac{D(z)+C(z)}{\pi}.
\]
To pass to rational coefficients, use the factorization
\[
  Q_{A,B}(x^2,y^2)
     =\prod_{s,t\in\{1,-1\}}(1+s\sqrt A\,x+t\sqrt B\,y).
\]
Together with invariance under sign changes and substitutions
$x\mapsto x^2$, $y\mapsto y^2$, this gives $m(Q_{A,B})=4m(1+\sqrt A\,x+\sqrt B\,y)$.
Hence,
\[
  \eps_z\,m(Q_{|z|^2,|1-z|^2})
       =\frac4\pi\bigl(D(z)+C(z)\bigr).
\]
So for the chain in Theorem~\ref{thm:realization},
\begin{equation}\label{eq:H-measure}
  m(H_\beta)=\frac4\pi D(\beta)+\frac4\pi C(\beta),
  \qquad C(\beta)=\sum_i n_iC(z_i).
\end{equation}
It remains to remove $C(\beta)$ by a rational scalar after taking a
suitable power.

To express $C(\beta)$ in terms of logarithms of primes, let $v_p$
be the usual exponent of a rational prime $p$
in a nonzero rational number, and define
\[
  \nu_p(u)=v_p(N_{K/\Q}(u)),\qquad u\in K^\times.
\]
This homomorphism extends to a $\Q$-linear functional on $\V$.
The cycle condition forces the corresponding products below to be
roots of unity.

\begin{lemma}\label{lem:torsion}
Let $\beta=\sum_i n_i[z_i]$ be a cycle.
For every rational prime $p$, the element
\begin{equation}\label{eq:U}
  U_p=\prod_i
       z_i^{\,n_i\nu_p(1-z_i)}
       (1-z_i)^{-n_i\nu_p(z_i)}
       \in K^\times
\end{equation}
is a root of unity. In particular, $U_p^w=1$.
\end{lemma}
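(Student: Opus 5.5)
The plan is to apply to the cycle condition $\delta\beta=0$ a linear map $\bigwedge^2_\Q\V\to\V$ built from the functional $\nu_p$. Since $\nu_p\colon K^\times\to\Z$ is a homomorphism, it extends $\Q$-linearly to $\V$. Define the contraction
\[
  \iota_p\colon \bigwedge^2_\Q\V\to\V,\qquad
  \iota_p(a\wedge b)=\nu_p(b)\,a-\nu_p(a)\,b .
\]
It is well defined because the right side is bilinear and alternating in $(a,b)$.

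Applying $\iota_p$ to $\delta\beta=\sum_i n_i\cl{z_i}\wedge\cl{1-z_i}=0$ gives
\[
  0=\sum_i n_i\bigl(\nu_p(1-z_i)\cl{z_i}-\nu_p(z_i)\cl{1-z_i}\bigr)
   =\cl{U_p}
\]
in $\V$. This holds because $u\mapsto\cl{u}$ turns products into sums, and all exponents $n_i\nu_p(\cdot)$ are integers, so $U_p\in K^\times$. Note that $\nu_p(z_i)$ and $\nu_p(1-z_i)$ are defined since $z_i\neq0,1$.

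It remains to identify the kernel of $K^\times\to K^\times\otimes_\Z\Q$. Because $\Q$ is a flat $\Z$-module, or directly because $M\otimes\Q$ is the localization of $M$ at $\Z\setminus\{0\}$, the kernel is the torsion subgroup of $K^\times$, namely $\mu(K)$. Hence $U_p$ is a root of unity. Since $\mu(K)$ is cyclic of order $w$, we get $U_p^w=1$.

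I expect no real obstacle. The only points needing care are that the contraction is well defined on the exterior square, which follows from alternation, and the torsion-kernel fact for $\otimes\Q$; both are standard. I would also note that $U_p=1$ for all but finitely many $p$, though the lemma does not require this.
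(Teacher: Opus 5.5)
Your proposal is correct and follows the paper's proof essentially verbatim: the same contraction $\iota_p(a\wedge b)=\nu_p(b)a-\nu_p(a)b$ applied to $\delta\beta=0$ yields $\cl{U_p}=0$, and the kernel of $K^\times\to\V$ is $\mu(K)$, of order $w$. Your added justifications (that $\iota_p$ is alternating and hence well defined, and that the kernel of $M\to M\otimes_\Z\Q$ is the torsion subgroup, via localization) are exactly the details the paper leaves implicit.
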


\begin{proof}
Apply the contraction
\[
  \iota_p:\bigwedge^2_{\Q}\V\longrightarrow\V,
  \qquad \iota_p(a\wedge b)=\nu_p(b)a-\nu_p(a)b
\]
to $\delta\beta=0$. This gives
\[
  0=\sum_i n_i\bigl(
       \nu_p(1-z_i)\cl{z_i}-\nu_p(z_i)\cl{1-z_i}\bigr)
    =\cl{U_p}.
\]
The kernel of $K^\times\to\V$ is $\mu(K)$, so $U_p^w=1$.
\end{proof}

\begin{lemma}
\label{lem:correction}
Under the hypotheses of Theorem~\ref{thm:realization}, set
\begin{equation}\label{eq:T}
  T_p=\sum_i n_i\bigl(
       \nu_p(1-z_i)\Arg z_i-\nu_p(z_i)\Arg(1-z_i)\bigr).
\end{equation}
Then $k_p=wT_p/(2\pi)$ is an integer and
\begin{equation}\label{eq:correction}
  C(\beta)=\frac{\pi}{w}\sum_p k_p\log p.
\end{equation}
\end{lemma}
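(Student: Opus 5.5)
The plan is to express $\log|u|$ for $u\in K^\times$ through the valuations $\nu_p$, substitute into $C(\beta)$, and then use Lemma~\ref{lem:torsion} to show that the resulting coefficient of each $\log p$ is a rational multiple of $\pi$ with the stated denominator.

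\textbf{Step 1: rewrite $C(\beta)$ as a sum over primes.} For $u\in K^\times$ we have $|u|^2=N_{K/\Q}(u)$, and this is a positive rational number. Hence
\[
\log|u|=\tfrac12\sum_p\nu_p(u)\log p,
\]
and the sum is finite. Substituting this into $C(z_i)=\Arg z_i\log|1-z_i|-\Arg(1-z_i)\log|z_i|$ and summing with weights $n_i$ gives
\[
C(\beta)=\tfrac12\sum_p T_p\log p,
\]
with $T_p$ exactly as in \eqref{eq:T}. Only finitely many $p$ contribute. If $k_p=wT_p/(2\pi)$, then $\tfrac12 T_p=\pi k_p/w$, which is \eqref{eq:correction}. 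It therefore remains to show that $k_p\in\Z$.

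\textbf{Step 2: integrality of $k_p$.} I will compare $T_p$ with the argument of $U_p$ from \eqref{eq:U}. The argument is additive modulo $2\pi$, so
\[
T_p\equiv\Arg U_p \pmod{2\pi}.
\]
More precisely, $T_p$ is an integer combination of principal arguments of the factors of $U_p$, with exponents $n_i\nu_p(1-z_i)$ and $-n_i\nu_p(z_i)$. Consequently $e^{iT_p}=U_p$.

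By Lemma~\ref{lem:torsion}, $U_p^w=1$. This gives $e^{iwT_p}=1$, so $wT_p\in2\pi\Z$, that is, $k_p\in\Z$.

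The main point needing care is the identity $e^{iT_p}=U_p$. It holds because $U_p=\prod|{\cdot}|e^{i\,\cdot}$, and the modulus of $U_p$ is $1$ (it is a root of unity), so only the phase remains. The exponents are integers because the $\nu_p$ values are integers, so no branch ambiguity arises when the argument is multiplied by an integer exponent.
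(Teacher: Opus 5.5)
Your proof is correct and essentially the same as the paper's. You derive $C(\beta)=\tfrac12\sum_p T_p\log p$ from $\log|u|=\tfrac12\sum_p\nu_p(u)\log p$, and you get integrality of $k_p$ from $e^{iT_p}=U_p/|U_p|=U_p$ together with $U_p^w=1$ from Lemma~\ref{lem:torsion}; the paper carries out these two steps in the opposite order.
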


\begin{proof}
By \eqref{eq:U} and \eqref{eq:T},
$
  \exp(iT_p)=\frac{U_p}{|U_p|}.
$
Lemma~\ref{lem:torsion} implies $|U_p|=1$ and $U_p^w=1$.
It follows that $\exp(iwT_p)=1$, or $wT_p\in2\pi\Z$, as required.

Since $N_{K/\Q}(u)=|u|^2$, we have  
\begin{equation}\label{eq:log-norm}
  \log|u|=\frac12\sum_p\nu_p(u)\log p.
\end{equation}
Only finitely many primes occur for the finitely many elements
$z_i$ and $1-z_i$, so all the following sums are finite.
Substituting \eqref{eq:log-norm} into \eqref{eq:local-data} gives
\[
  C(\beta)=\frac12\sum_p T_p\log p
          =\frac\pi w\sum_p k_p\log p.
\]
\end{proof}

\begin{proof}[Proof of Theorem~\ref{thm:realization}]
Equations \eqref{eq:H-measure} and \eqref{eq:correction} give
\[
  m(H_\beta)=\frac4\pi D(\beta)+\frac4w\sum_p k_p\log p.
\]
Define
\begin{equation}\label{eq:scalar}
  c_\beta=\prod_p p^{-4k_p}\in\Q,\qquad
  R_\beta=\left(\prod_p p^{-4k_p}\right)
          \prod_i Q_{A_i,B_i}^{\,w n_i\eps_i}.
\end{equation}
By additivity of Mahler measure,
\[
  m(R_\beta)
   =-4\sum_p k_p\log p+w\,m(H_\beta)
   =\frac{4w}\pi D(\beta).
\]
If every $n_i\eps_i\geq0$, then $R_\beta\in\Q[x,y]$.

\end{proof}

The integers $k_p$ can be computed with certified error bounds.
First factor the rational norms $|z_i|^2$ and $|1-z_i|^2$ to
determine the finite set of relevant primes and the valuations in
\eqref{eq:T}. Then enclose the principal arguments and $\pi$ in
rational intervals, refining them until the resulting interval for
$wT_p/(2\pi)$ has length less than $1$. Since this number is an
integer by Lemma~\ref{lem:correction}, it is the unique integer in
that interval. This procedure terminates because every $z_i$ and
$1-z_i$ is nonreal, so their principal arguments admit arbitrarily
accurate certified approximations.

To apply the realization theorem, we use the following geometric
cycles from \cite[Theorem 4.7(i), Remark 4.9, Lemma 4.4, and
Corollary A.5]{BDGRY}. We state their properties in our notation.
\begin{proposition}[Burns--de Jeu--Gangl--Rahm--Yasaki]
\label{prop:geometric}
For every imaginary quadratic field $K\subset\C$, there are finitely
many $z_j\in K$ with $\operatorname{Im}z_j>0$ and positive integers
$c_j$ such that
\begin{equation}\label{eq:gamma}
  \gamma_K=\sum_j c_j\bigl([z_j]-[\bar z_j]\bigr),\qquad
  \delta\gamma_K=0,
\end{equation}
and
\[
  \sum_j c_jD(z_j)=-24\pi\zeta_K'(-1).
\]
\end{proposition}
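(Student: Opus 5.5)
This proposition is quoted from \cite{BDGRY}, so the plan is to assemble it from the cited results and translate it into the present notation, rather than reprove it. The argument has three steps.

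\textbf{Step 1: obtain a cycle with the right regulator.} First I would invoke \cite[Theorem 4.7(i)]{BDGRY}. It produces, for every imaginary quadratic $K$, an explicit element of $\Z[K\setminus\{0,1\}]$ whose image under $\delta$ vanishes in $\bigwedge^2 K^\times\otimes\Q$, i.e.\ a Bloch cycle in our sense. It also computes the value of the Bloch--Wigner dilogarithm on that cycle as a rational multiple of $\pi\zeta_K'(-1)$. The normalization is fixed by \cite[Remark 4.9]{BDGRY}. In checking it, I would compare their $D$ with ours; the conventions agree by \cite[Section 3.1.2]{BDGRY}. I would also use the functional equation relating $\zeta_K'(-1)$ to $L(\chi_{-f},2)\sqrt f/\pi^3$, so that the constant $-24\pi$ comes out exactly.

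\textbf{Step 2: put the cycle in conjugate-pair form.} Next I would use \cite[Lemma 4.4]{BDGRY}. It says the geometric cycle is built from symbols of the form $[z]-[\bar z]$, or equivalently that it is anti-invariant under complex conjugation. Any real $z_j$ can be discarded: real entries contribute $D=0$, and their $\delta$-contributions cancel in conjugate pairs. Choosing the representative of each pair with $\operatorname{Im}z_j>0$ gives the form \eqref{eq:gamma}. The identity $D(\bar z)=-D(z)$ then shows that $D(\gamma_K)=2\sum_j c_jD(z_j)$. This factor of $2$ must be absorbed when matching the constant, and that is exactly where \cite[Remark 4.9]{BDGRY} is needed.

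\textbf{Step 3: make the coefficients positive.} Finally, \cite[Corollary A.5]{BDGRY} would be used to show that the $c_j$ may be taken to be positive integers. The raw coefficients may have either sign. If some $c_j<0$, I would replace $z_j$ by $\bar z_j$; this flips the sign of both $c_j$ and the bracket. The catch is that the new representative then lies in the lower half-plane.

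So the real content of the appendix corollary is the following: the cycle constructed from the hyperbolic tessellation, with orientations given by the geometry, already has all coefficients of one sign on the upper-half-plane representatives. I expect this to be the main obstacle. It amounts to checking that the orientation conventions of \cite{BDGRY} for ideal tetrahedra give $\operatorname{Im}z_j>0$ with positive multiplicity. Doing so would require tracking their orientation conventions carefully, rather than any new argument here.
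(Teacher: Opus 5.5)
Your outline has the right shape, but Step~2 does not work as written. You treat the geometric cycle as already anti-invariant, and you propose to discard real symbols because ``their $\delta$-contributions cancel in conjugate pairs.'' Neither part holds.

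The class $\beta_{\mathrm{geo}}$ of \cite[Theorem 4.7(i)]{BDGRY} contains, besides the tetrahedral symbols, a correction $\beta_{\Q}$ made of rational symbols. A real symbol is its own conjugate, so it has no partner to cancel against. Deleting real symbols from a cycle does not preserve $\delta=0$ in general. For instance, $\delta[3]=\cl{3}\wedge\cl{2}\neq0$ in $\bigwedge^2_{\Q}\V$, since $2$ and $3$ are multiplicatively independent modulo roots of unity.

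The missing step is antisymmetrization. Set $\gamma_K=\beta_{\mathrm{geo}}-\overline{\beta_{\mathrm{geo}}}$. Since $\delta$ commutes with complex conjugation, $\delta\gamma_K=0$, and the rational symbols cancel identically. Then \cite[Remark 4.9]{BDGRY} identifies the class of $\gamma_K$ with $2\beta_{\mathrm{geo}}$. Also, because $D$ vanishes on $\beta_{\Q}$, the sum $\sum_jc_jD(z_j)$ is simply the regulator of $\beta_{\mathrm{geo}}$. So no factor $2$ has to be absorbed into the constant.

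The more serious gap is Step~3. It leaves open exactly what Theorem~\ref{thm:weak} needs, namely $\operatorname{Im}z_j>0$ with $c_j\in\Z_{>0}$. You also assign the wrong content to \cite[Corollary A.5]{BDGRY}. There is no sign problem to solve:
\begin{itemize}
\item The nonreal symbols are cross-ratios of \emph{positively oriented} tetrahedra in the tessellation of \cite[Section 4.2]{BDGRY}, so they lie in the upper half-plane by construction.
\item Each carries the weight $24/|\Gamma_\sigma|$, where $\Gamma_\sigma$ is the stabilizer of the corresponding polytope, and this weight is automatically positive.
\item What Corollary A.5 provides is \emph{integrality}: every $|\Gamma_\sigma|$ divides $24$. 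Your Step~1 takes integer coefficients for granted from the start.
\end{itemize}
As you note yourself, swapping $z_j$ and $\bar z_j$ cannot repair a negative coefficient within the required form. Without this reading of the construction, the argument does not close.
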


Here $z_j$ are the cross-ratios of positively oriented tetrahedra in
the tessellation of \cite[Section 4.2]{BDGRY}. The coefficients $c_j$
are $24$ divided by the corresponding polytope stabilizer orders;
these orders divide $24$ by \cite[Corollary A.5]{BDGRY}.
The geometric class $\beta_{\rm geo}$ of
\cite[Theorem 4.7(i)]{BDGRY} includes a
correction $\beta_{\Q}$ represented by rational symbols. These
symbols are fixed by complex conjugation, so the correction disappears
upon antisymmetrization. By \cite[Remark 4.9]{BDGRY}, the resulting
chain $\gamma_K$ represents $2\beta_{\rm geo}$.

\begin{proof}[Proof of Theorem~\ref{thm:weak}]
Let $K=\Q(\sqrt{-f})$, write $d_f=L'(\chi_{-f},-1)$, and use the
cycle $\gamma_K$ of Proposition~\ref{prop:geometric} in
Theorem~\ref{thm:realization}. Since
$\zeta_K(s)=\zeta(s)L(\chi_{-f},s)$ and $L(\chi_{-f},-1)=0$,
\[
  \zeta_K'(-1)=\zeta(-1)L'(\chi_{-f},-1)=-\frac1{12}d_f.
\]
Using $D(\bar z_j)=-D(z_j)$, we obtain
\begin{equation}\label{eq:gamma-regulator}
  D(\gamma_K)=2\sum_j c_jD(z_j)
        =-48\pi\zeta_K'(-1)=4\pi d_f.
\end{equation}
The conjugate symbols have opposite
coefficients and imaginary parts, so their factors combine to give
\[
  H_{\gamma_K}
    =\prod_j Q_{|z_j|^2,|1-z_j|^2}^{\,2c_j}\in\Q[x,y].
\]
With the integers $k_p$ computed from $\gamma_K$ as in
Lemma~\ref{lem:correction}, define
\begin{equation}\label{eq:final-polynomial}
  R_f(x,y)=\left(\prod_p p^{-k_p}\right)
       \prod_j Q_{|z_j|^2,|1-z_j|^2}(x,y)^{\,wc_j/2}.
\end{equation}
Since $w$ is even, the exponents $wc_j/2$ are positive integers, so
this is a nonzero polynomial in $\Q[x,y]$. Moreover, \eqref{eq:scalar} gives
\[
  R_f^4=R_{\gamma_K}
   =\left(\prod_p p^{-4k_p}\right)
      \prod_j Q_{|z_j|^2,|1-z_j|^2}^{\,2wc_j}.
\]
Equations \eqref{eq:realization} and \eqref{eq:gamma-regulator} now give
\[
  m(R_f)=\frac14m(R_{\gamma_K})
        =\frac{w}{\pi}D(\gamma_K)=4w\,d_f.
\]
\end{proof}

\section{Explicit examples}\label{sec:examples}

\subsection{A family at conductor 8}

At conductor $8$, a family in all even dimensions can be built from
the following polynomials. For $d\geq1$, let
\begin{equation}\label{eq:F8}
 F_d(x_1,\ldots,x_d)=\frac12\left(
       \prod_{\nu=1}^d(1+x_\nu)^4+
       \prod_{\nu=1}^d(1-x_\nu)^4\right)\in\Z[x_1,\ldots,x_d].
\end{equation}
The coefficients are integral because the odd-total-degree terms
cancel and the even-total-degree terms occur twice.

The exponents below are obtained from a generating function that
eliminates the lower-weight terms.
\begin{proposition}\label{prop:conductor8}
For $k\geq1$ and $1\leq d\leq2k$, define
\begin{equation}\label{eq:a8}
 a_{k,d}=\left.\frac{d^{\,2k-1}}{dt^{2k-1}}
 \bigl(\cos(2t)(\cos t-\sin t)\sin^{d-1}(2t)\bigr)\right|_{t=0}
 \in\Z.
\end{equation}
Regarding $F_d$ as a polynomial in the first $d$ of the variables
$x_1,\ldots,x_{2k}$, set
\[
 R_{8,2k}=\prod_{d=1}^{2k}F_d^{a_{k,d}}
 \in\Q(x_1,\ldots,x_{2k})^\times.
\]
Then
\begin{equation}\label{eq:R8-measure}
 m(R_{8,2k})=
 \frac{2^{4k-1}(2k-1)!\sqrt2}{\pi^{2k-1}}L(\chi_{-8},2k)
 =2(-1)^{k-1}L'(\chi_{-8},1-2k).
\end{equation}
\end{proposition}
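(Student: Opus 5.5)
The plan is to reduce each $m(F_d)$ to the quantities $M_d(\zeta_8^{\pm1}),M_d(\zeta_8^{\pm3})$ and then show that the exponents $a_{k,d}$ cancel every lower-weight contribution. The second step is the one I expect to be hard.

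\textbf{Step 1: reduction of $m(F_d)$.} Factor
\[
F_d=\tfrac12\prod_\nu(1+x_\nu)^4\,\bigl(1+T_d^4\bigr).
\]
Since $m(1+x_\nu)=0$, this gives $m(F_d)=-\log2+m(1+T_d^4)$. Over $\C$ we have $1+T^4=\prod_{z^4=-1}(z+T)$, with $z$ running over the primitive eighth roots of unity. Hence
\[
m(1+T_d^4)=\sum_{z^4=-1}M_d(z).
\]
Because $T_d$ has real coefficients, $M_d(\bar z)=M_d(z)$. The substitution $x_1\mapsto x_1^{-1}$ sends $T_d\mapsto-T_d$, so $M_d(-z)=M_d(z)$. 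Altogether
\[
m(F_d)=4M_d(\zeta_8)-\log 2,\qquad\text{where }\zeta_8=e^{i\pi/4}\text{ lies in both half-planes of Proposition~\ref{higher:triangular}}.
\]

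\textbf{Step 2: expansion into weights and elimination.} Applying \eqref{higher:even-triangle} or \eqref{higher:odd-triangle} to $M_d(\zeta_8)$ shows that $m(F_d)$ equals:
\begin{itemize}
\item a rational multiple of $\log2$, coming from $\tfrac12\log|1-i|$ or $\log|1+\zeta_8|$ together with $B_1(\zeta_8)$; plus
\item a rational combination of $B_j(\zeta_8)$ for $2\le j\le d$.
\end{itemize}
At $\zeta_8$, the values $B_j$ are explicit. Using $A_j(z)=\Li_j(z)-\Li_j(-z)$:
\begin{itemize}
\item for even $j$, $B_j(\zeta_8)$ is a rational multiple of $\sqrt2L(\chi_{-8},j)/\pi^{j-1}$;
\item for odd $j$, $B_j(\zeta_8)$ is a rational multiple of $\sqrt2L(\chi_8,j)/\pi^{j-1}$, or of $\log(1+\sqrt2)$ when $j=1$.
\end{itemize}
Both follow from the Gauss-sum computation used in the proof of Theorem~\ref{thm:weak-higher}. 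One must therefore check that $\sum_d a_{k,d}m(F_d)$ kills all terms except the $L(\chi_{-8},2k)$ term. The triangular approach only guarantees that such a rational combination exists; identifying it with the closed form \eqref{eq:a8} is the main obstacle.

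\textbf{Step 2, first attempt: a Mellin computation.} Rather than working through Proposition~\ref{higher:triangular}, I would go back to \eqref{higher:kernel-Md}. The kernel satisfies $\int_0^\infty t^sp_1(t)\,\dd t=\sec(\pi s/2)$, so multiplicativity gives $\int_0^\infty t^sp_d(t)\,\dd t=\sec^d(\pi s/2)$.

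Next, expand $\tfrac12\log|\zeta_8+i^dt|+\tfrac12\log|\zeta_8-i^dt|$ by its Mellin–Barnes representation. The Mellin transform of $\log|1+e^{i\phi}t|$ is essentially $\pi\cos(\phi s)/(s\sin\pi s)$. Then $4M_d(\zeta_8)-\log2$ becomes a contour integral of
\[
\sec^d(\pi s/2)\,\frac{\pi}{s\sin\pi s}\times\bigl(\text{trigonometric factor in }\pi s/4\text{ depending on the parity of }d\bigr).
\]
Shifting the contour expresses the $L$-values as residues at the even integers; in each case the residues come from $\sec(\pi s/2)$ and $1/\sin\pi s$.

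In this picture, $\sum_d a_{k,d}m(F_d)$ corresponds to replacing $\sec^d$ by $\sum_d a_{k,d}\sec^d$. The variable $t$ in \eqref{eq:a8} plays the role of $\pi s/4$. The factor $\cos(2t)(\cos t-\sin t)\sin^{d-1}(2t)$, taken with its $(2k-1)$-st Taylor coefficient, should be exactly what makes every residue vanish except the top one, including the one producing $\log2$. I would verify this by writing the relevant Taylor coefficients as a triangular system in $d$ and checking that \eqref{eq:a8} inverts it.

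\textbf{Step 3: evaluation.} The surviving top-weight contribution is read off from the diagonal coefficient $c_{kk}=2^{2k-2}$ in \eqref{higher:even-triangle}, combined with $a_{k,2k}$. This gives the first expression in \eqref{eq:R8-measure}. The second equality is then \eqref{higher:functional} with $f=8$:
\[
L'(\chi_{-8},1-2k)=(-1)^{k-1}(2k-1)!\,8^{2k-1}\sqrt8\,L(\chi_{-8},2k)/(2^{2k}\pi^{2k-1}),
\]
which matches after simplification.

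Integrality of $a_{k,d}$ is immediate: $\cos(2t)(\cos t-\sin t)\sin^{d-1}(2t)$ has a Taylor series with coefficients in $\Q$ whose $(2k-1)$-st derivatives at $0$ are integers, since each factor has integer derivatives at $0$.
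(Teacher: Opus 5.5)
Your Step~1 ($\mu_d=m(F_d)=4M_d(\zeta_8)-\log2$) matches the paper. Your Step~3 also matches: given the cancellation, the top term is $4c_{kk}a_{k,2k}B_{2k}=2^{4k-1}(2k-1)!\,B_{2k}$, followed by \eqref{higher:functional}. The gap is Step~2, which is the whole content of the proposition. You say the exponents $a_{k,d}$ ``should be exactly what makes every residue vanish except the top one'' and propose to check that \eqref{eq:a8} inverts a triangular system. That is not an argument valid for every $k$. The lower-weight quantities to be annihilated include $B_1(\zeta_8)=\log(1+\sqrt2)$ and the odd-weight values $\sqrt2\,L(\chi_8,2j+1)/\pi^{2j}$. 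Their exact cancellation in every weight needs an identity you never produce. (Also, $\log2$ is not among the terms the $a_{k,d}$ must kill: it cancels inside each $\mu_d$, since $4\cdot\tfrac12\log|1-i|=\log2$ and $4\log|1+\zeta_8|=\log2+2B_1$.)

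The missing idea is a closed form for the generating function in the dimension variable. In the paper, Lal\'in's generating functions give $\sum_rE_r(X)s^{2r-2}=2\cosh(2X\arcsin s)/\sqrt{1-s^2}$ and $\sum_rO_r(X)s^{2r-1}=2\sinh(2X\arcsin s)/\sqrt{1-s^2}$. From these, $\sum_{r\ge j}c_{rj}s^{2r-1}=2^{2j-2}(\arcsin s)^{2j-1}/(1-s^2)$, with an analogous formula for $d_{rj}$. At $\zeta_8$, the relations $-\Log(-i\zeta_8)=i\pi/4$ and $-\Log\zeta_8=-i\pi/4$ give $W(t)=\sum_nw_nt^{n-1}=(\cos(t/4)+\sin(t/4))\sum_nB_nt^{n-1}$. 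Together these yield $\sum_d\mu_ds^{d-1}=\frac{2}{1-s^2}W(2\arcsin s)$. Since $\cos^2(2t)/(\cos t+\sin t)=\cos(2t)(\cos t-\sin t)$, the substitution $s=\sin2t$ gives
\[
\cos(2t)(\cos t-\sin t)\sum_{d\ge1}\mu_d\sin^{d-1}(2t)=2\sum_{n\ge1}4^{n-1}B_nt^{n-1}.
\]
Differentiating $2k-1$ times at $0$ gives $\sum_da_{k,d}\mu_d=2\cdot4^{2k-1}(2k-1)!\,B_{2k}$ with no lower-weight terms, so no triangular inversion needs checking.

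Your Mellin heuristic is consistent with this. One has $\sum_rs^{2r-2}\sec^{2r-1}\theta=\cos\theta/(\cos^2\theta-s^2)=\frac{1}{2\sqrt{1-s^2}}\bigl(\sec(\theta+\arcsin s)+\sec(\theta-\arcsin s)\bigr)$. So weighting by $(\sin2t)^{d-1}$ does shift your Mellin variable by $\pm4t/\pi$, and $t\leftrightarrow\pi u/4$ is right. But this summation over $d$ is exactly the step you would have to carry out. It must be done separately for each parity of $d$, since $i^d$ changes the logarithmic kernel, and your proposal does not do it.
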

\begin{proof}
Leibniz's rule gives $a_{k,d}\in\Z$ and
$a_{k,2k}=2^{2k-1}(2k-1)!$.
Let $z_0=e^{\pi i/4}$ and $\mu_d=m(F_d)$. Since
$F_d=\tfrac12\prod_\nu(1+x_\nu)^4(1+T_d^4)$, factoring
$1+T_d^4$ over its roots $\pm z_0,\pm\bar z_0$ gives
\begin{equation}\label{eq:mu8}
 \mu_d=4M_d(z_0)-\log2.
\end{equation}
To see this, note that $m(1+x_\nu)=0$. The substitution $x_1\mapsto x_1^{-1}$
in the torus integral gives $M_d(-z)=M_d(z)$, and conjugation gives
$M_d(\bar z)=M_d(z)$ because $T_d$ has rational coefficients.
Thus the four factors have equal measure.

Write $\chi_8=(8/\cdot)$ for the primitive even quadratic character
of conductor $8$. Grouping the odd terms of the polylogarithmic
series by their residue classes modulo $8$ gives
\[
 A_n(z_0)=\sqrt2\bigl(L(\chi_8,n)+iL(\chi_{-8},n)\bigr)
 \qquad(n\geq1).
\]
At $n=1$ the series converge by Dirichlet's test. Write
$B_n=B_n(z_0)$, so that
\[
 B_n=\begin{cases}
 \sqrt2\,L(\chi_{-8},n)/\pi^{n-1},&n\text{ even},\\
 \sqrt2\,L(\chi_8,n)/\pi^{n-1},&n\text{ odd}.
 \end{cases}
\]
In particular, $B_1=\log(1+\sqrt2)$. Normalize the polylogarithmic
combinations in the triangular formulas by setting $w_1=B_1$ and
\[
 w_{2j}=\pi^{1-2j}\operatorname{Im}\mathcal V_{2j}(z_0),\qquad
 w_{2j+1}=\pi^{-2j}\operatorname{Re}\mathcal U_{2j+1}(z_0)
 \qquad(j\geq1).
\]
Since $-\Log(-iz_0)=i\pi/4$ and $-\Log z_0=-i\pi/4$, both
parities give
\[
 w_n=\sum_{h=0}^{n-1}
       \frac{(-1)^{\lfloor h/2\rfloor}}{4^h h!}B_{n-h}.
\]
Summing over $n$ gives the formal identity
\begin{equation}\label{eq:W8}
 W(t):=\sum_{n\geq1}w_nt^{n-1}
 =\bigl(\cos(t/4)+\sin(t/4)\bigr)
       \sum_{n\geq1}B_nt^{n-1}.
\end{equation}

Substituting $z=z_0$ in Proposition~\ref{higher:triangular} and
using \eqref{eq:mu8} gives, after simplifying the elementary terms,
\begin{equation}\label{eq:mu8-even}
 \mu_{2r}=4\sum_{j=1}^r c_{rj}w_{2j},\qquad
 \mu_{2r+1}=2B_1+4\sum_{j=1}^r d_{rj}w_{2j+1},
\end{equation}
where the sum in the second identity is interpreted as zero when $r=0$.
The coefficients in Proposition~\ref{higher:triangular} satisfy
\begin{equation}\label{higher:c-generating}
 \sum_{r\ge j}c_{rj}s^{2r-1}
 =\frac{2^{2j-2}(\arcsin s)^{2j-1}}{1-s^2},\qquad
 \sum_{r\ge j}d_{rj}s^{2r}
 =\frac{2^{2j-1}(\arcsin s)^{2j}}{1-s^2}.
\end{equation}
To prove these identities, use the product formulas of
\cite[Theorem 17]{Lalin06}, which give
\[
 \sum_{r\ge1}E_r(X)s^{2r-2}
 =\frac{2\cosh(2X\arcsin s)}{\sqrt{1-s^2}},\qquad
 \sum_{r\ge1}O_r(X)s^{2r-1}
 =\frac{2\sinh(2X\arcsin s)}{\sqrt{1-s^2}}.
\]
Interpreting the series as formal differential operators, apply
these identities with $X=\partial_u$ to the even function
$u^{2j-1}/\sin u$ and the odd function $u^{2j}/\sin u$, respectively,
and evaluate at $u=0$. Formula~\eqref{higher:coefficients} and
$\sin(2\arcsin s)=2s\sqrt{1-s^2}$ give \eqref{higher:c-generating}.

Using these generating functions to sum \eqref{eq:mu8-even} yields
\[
 \sum_{d\geq1}\mu_ds^{d-1}
 =\frac{2}{1-s^2}W(2\arcsin s).
\]
Substitute $s=\sin(2t)$ and use \eqref{eq:W8}. We obtain
\begin{equation}\label{eq:conductor8-generating}
 \cos(2t)(\cos t-\sin t)
       \sum_{d\geq1}\mu_d\sin^{d-1}(2t)
 =2\sum_{n\geq1}4^{n-1}B_nt^{n-1}.
\end{equation}
Take the derivative of order $2k-1$ at zero. Only the terms with
$d\leq2k$ contribute on the left, and their coefficients are the
integers $a_{k,d}$ defined in \eqref{eq:a8}. Hence
\[
 m(R_{8,2k})=\sum_{d=1}^{2k}a_{k,d}\mu_d
 =2\cdot4^{2k-1}(2k-1)!B_{2k}.
\]
This proves the first equality in \eqref{eq:R8-measure}; the second
follows from \eqref{higher:functional} with $f=8$.
\end{proof}
\begin{example}
For $k=2$, the exponent vector is $(13,-38,-24,48)$, so
\begin{equation}\label{eq:R8-four}
 m\left(\frac{F_1^{13}F_4^{48}}{F_2^{38}F_3^{24}}\right)
 =\frac{768\sqrt2}{\pi^3}L(\chi_{-8},4)
 =-2L'(\chi_{-8},-3).
\end{equation}
The lower-weight cancellation is explicit:
\[
\begin{aligned}
 \mu_1&=2B_1,&
 \mu_2&=B_1+4B_2,\\
 \mu_3&=\tfrac74B_1+2B_2+8B_3,&
 \mu_4&=\tfrac98B_1+\tfrac{25}{6}B_2+4B_3+16B_4.
\end{aligned}
\]
Thus $13\mu_1-38\mu_2-24\mu_3+48\mu_4=768B_4$.
For $k=3$, the same construction gives the six-variable identity
\begin{equation}\label{eq:R8-six}
 m\left(\frac{F_2^{842}F_3^{1680}F_6^{3840}}
                   {F_1^{121}F_4^{4320}F_5^{1920}}\right)
 =\frac{245760\sqrt2}{\pi^5}L(\chi_{-8},6)
 =2L'(\chi_{-8},-5).
\end{equation}
\end{example}

\subsection{Two constructions at conductor 31}

At conductor $31$, the cycle data reduce the multiplier in
Theorem~\ref{thm:weak} from $8$ to $4$.
Let $\omega=\sqrt{-31}$, with $\operatorname{Im}\omega>0$.
Let $\eta_{31}$ be the $13$-term chain in the data file
\texttt{bloch\_neg31.txt} of \cite{Yasaki}. The file uses the field
generator $(1+\omega)/2$. Since all coefficients of $\eta_{31}$ are even,
taking half the difference with its complex conjugate and collecting
terms gives the integral cycle
\[
  \beta_{31}=\frac12(\eta_{31}-\bar\eta_{31})
            =\sum_{j=1}^8b_j([z_j]-[\bar z_j]).
\]
Here $\gamma_{31}=\eta_{31}-\bar\eta_{31}$ represents
$2\beta_{\rm geo}$ by \cite[Remark 4.9 and Section 6]{BDGRY}.
The values of $z_j,A_j,B_j$, and $b_j$ are listed in
Table~\ref{tab:31-cycle}. Since $\beta_{31}=\gamma_{31}/2$,
equations \eqref{eq:gamma} and \eqref{eq:gamma-regulator} yield
\[
  \delta\beta_{31}=0, \qquad D(\beta_{31})=2\pi d_{31}.
\]

\begin{table}[htbp]
\caption{Data for the Bloch cycle $\beta_{31}$.}
\label{tab:31-cycle}
\centering
$\begin{array}{|c|c|c|c|r|}\hline
 j&z_j&A_j=|z_j|^2&B_j=|1-z_j|^2&b_j\\ \hline
 1&(3+\omega)/4&5/2&2&9\\ \hline
 2&(-3+\omega)/10&2/5&2&6\\ \hline
 3&(11+3\omega)/20&1&9/10&10\\ \hline
 4&(3+\omega)/10&2/5&4/5&9\\ \hline
 5&(19+3\omega)/40&2/5&9/20&12\\ \hline
 6&9(1+\omega)/16&81/8&10&12\\ \hline
 7&(15+\omega)/18&64/81&10/81&6\\ \hline
 8&(11+3\omega)/200&1/100&9/10&2\\ \hline
\end{array}$
\end{table}

For the cycle $\beta_{31}$, Lemma~\ref{lem:correction} gives the
correction integers
\[
  k_2=14,\qquad k_3=48,\qquad k_5=-10,
  \qquad k_p=0\quad(p\ne2,3,5).
\]

The construction in the proof of Theorem~\ref{thm:weak} gives the
following example, with $Q_{A,B}$ as in Section~\ref{sec:cycles}.

\begin{example}
With $A_j,B_j$, and $b_j$ as in Table~\ref{tab:31-cycle}, define
\[
  \widetilde R_{31}(x,y)=\frac{5^{10}}{2^{14}3^{48}}
     \prod_{j=1}^8 Q_{A_j,B_j}(x,y)^{b_j}\in\Q[x,y].
\]
It has total degree $2\sum_j b_j=132$ and satisfies
\[
  m(\widetilde R_{31})=\frac2\pi D(\beta_{31})=4d_{31}.
\]
Numerically, $m(\widetilde R_{31})\approx68.8393544832$.
\end{example}

The polynomial $\widetilde R_{31}$ gives an explicit realization
over $\Q$ of the weak conjecture at conductor $31$.
The numerical identities proposed in \cite[Table 3]{HMR} instead
involve combinations of $d_{31}$ with values at other conductors, namely
$\frac{2}{21}(d_{31}+4d_7)$ and
$\frac{2}{15}(d_{31}+d_{15})$.

For comparison, the general construction gives the following
higher-degree realization.

\begin{example}
Specialize the construction of
Theorem~\ref{thm:weak-higher} to $k=1$ and $f=31$.
Take $M=23$ in \eqref{higher:R-def}. This choice follows directly
from the explicit formulas \eqref{higher:ell-pair} and
\eqref{higher:reference}. To verify this, the remainder estimate
\[
 \left|\operatorname{arctanh}u-u\right|
 \leq \frac{|u|^3}{3(1-u^2)}
 \qquad (|u|<1)
\]
gives
\[
 \frac{|\ell_{31}(s)|}{g_{31}(s)}<0.517
 \quad(0<s\leq0.1),
 \qquad
 \frac{|\ell_{31}(s)|}{g_{31}(s)}<17.83
 \quad(s\geq5),
\]
where in the second estimate we also use
$\ell_{31}(s)=\ell_{31}(1/s)$. A numerical interval evaluation on
$[0.1,5]$, split at the pole $s=1/\sqrt{31}$, gives
\[
 \frac{|\ell_{31}(s)|}{g_{31}(s)}<23
 \qquad\left(s>0,\ s\ne\frac1{\sqrt{31}}\right).
\]
At $s=1.0210852$, a numerical evaluation gives
\[
 22.30502<\frac{|\ell_{31}(s)|}{g_{31}(s)}<22.30503.
\]
Numerically, this is the global maximum, attained at
$s\approx1.0210852009$. Thus $M=23$ is the smallest admissible
integer for this choice of the auxiliary function $V_{31}$.
For the descent step, choose the following parameters.
Let $\zeta=e^{2\pi i/31}$, $c_a=\zeta^a+\zeta^{-a}$, and
$E=\Q(c_1)$. 
Write $\chi=\chi_{-31}$. For $1\le a\le15$, let $r_a$
be the representative of $2a$ modulo $31$ in
$\{-15,\ldots,15\}$, and set $e_a=\chi(a)r_a$. Thus
\[
(e_1,\ldots,e_{15})
=(2,4,-6,8,10,-12,14,-15,-13,-11,9,7,5,-3,1).
\]
Specializing \eqref{higher:even-triangle} to weight $2$ and using
the one-variable Jensen formula, Proposition~\ref{higher:lift},
and the quadratic Gauss sum gives
\[
31m(\mathcal R_{31,2})
=
6d_{31}
-\sum_{a=1}^{15}e_a
 \log\left(\frac{2+c_a}{2-c_a}\right).
\]
To cancel this correction, let
\[
 \lambda=\prod_{a=1}^{15}\left(\frac{2+c_a}{2-c_a}\right)^{e_a},
 \qquad b=\bigl(24(2-c_1)\bigr)^{-84},\qquad
 H(x)=\frac{\Norm_{E/\Q}(x-b\lambda)}{\Norm_{E/\Q}(x-b)}.
\]
Here the factor $24$ separates the distinguished embedding from the
remaining embeddings: $24(2-c_1)<1$, whereas
$24(2-\sigma(c_1))>1$ for every other real embedding $\sigma$.
The exponent $84$ is chosen large enough that this separation is
preserved after multiplication by $\lambda$.
For the chosen inclusion $\sigma_1:E\hookrightarrow\R$, both
$\sigma_1(b)$ and $\sigma_1(b\lambda)$ exceed $1$; at every other
real embedding they lie in $(0,1)$. Jensen's formula therefore gives
$m(H)=\log(b\lambda)-\log b=\log\lambda$, canceling the correction.
Thus an explicit rational function is
\begin{equation}\label{eq:31-analytic}
 \widehat R_{31}(x,y)=\mathcal R_{31,2}(x,y)^{31}H(x),
 \qquad m(\widehat R_{31})=6d_{31}.
\end{equation}

In reduced form, the numerator and denominator of
$\mathcal R_{31,2}$ have total degrees $2M+32=78$ and
$4M+30=122$, respectively. Since the numerator and denominator of
$H$ both have degree $[E:\Q]=15$, those of $\widehat R_{31}$ have
total degrees
\[
 31\cdot78+15=2\,433,
 \qquad
 31\cdot122+15=3\,797,
\]
respectively.
For these explicit choices, the Bloch-cycle construction yields a
substantially lower degree.
\end{example}

\section*{Acknowledgements}
The first author is supported by the National Natural Science
Foundation of China (Grant No. 12231009). The second author is supported by the Natural Science Foundation of Anhui Province (Grant No. 2508085QA017).

\end{document}